\pgfplotsset{compat = newest}
\DeclareMathOperator{\divergence}{div}
\newtheorem{theorem}{Theorem}[section]
\theoremstyle{plain}
\newtheorem{step}{Step}[section]
\newtheorem{definition}{Definition}[section]
\newtheorem{lemma}{Lemma}[section]
\newtheorem{remark}{Remark}[section]
\newcommand{\hn}{\mathbb{H}^{n}}
\numberwithin{equation}{section}
\begin{document}
\title[Non-linear Heat Equation on the Hyperbolic Space]{Non-linear heat equation on the Hyperbolic space: Global existence and finite-time Blow-up}

\author{Debdip Ganguly}
\address{Debdip Ganguly, Department of Mathematics\\
Indian Institute of Technology Delhi\\
IIT Campus, Hauz Khas, New Delhi, Delhi 110016, India}
\email{debdipmath@gmail.com, debdip@maths.iitd.ac.in}
\author{Debabrata Karmakar}
\address{Debabrata Karmakar, Tata Institute of Fundamental Research\\
Centre For Applicable Mathematics\\
Post Bag No 6503, GKVK Post Office, Sharada Nagar, Chikkabommsandra, Bangalore 560065, India}
\email{debabrata@tifrbng.res.in}
\author{Saikat Mazumdar}
\address{Saikat Mazumdar: Department of Mathematics\\
Indian Institute of Technology Bombay\\
Mumbai 400076, India}
\email{saikat@math.iitb.ac.in, saikat.mazumdar@iitb.ac.in}

\date{\today}
\subjclass[2010]{Primary: 35R01. Secondary:  35A01, 58J35}
\keywords{Finite-time blow-up, global solutions, Fujita exponent, hyperbolic space}

%\thanks{XX}

\begin{abstract}

We consider the following Cauchy problem for the semi linear heat equation on the hyperbolic space :

 \begin{align}\label{abs:eqn}
\left\{\begin{array}{ll}
\partial_{t}u=\Delta_{\mathbb{H}^{n}} u+  f(u, t) &\hbox{ in }~ \mathbb{H}^{n}\times (0, T),\\
\\
\quad u =u_{0} &\hbox{ in }~ \mathbb{H}^{n}\times \{0\}.
\end{array}\right.
\end{align}
We study Fujita phenomena for the non-negative initial data $u_0$ belonging to  $C(\mathbb{H}^{n}) \cap L^{\infty}(\mathbb{H}^{n})$ and for different choices of $f$ of the form $f(u,t) = h(t)g(u).$ It is well-known that for power 
nonlinearities in $u,$ the power weight $h(t) = t^q$ is sub-critical in the sense that non-negative global solutions exist for small initial data. On the other hand,  \eqref{abs:eqn} exhibits
Fujita phenomena for the exponential weight $h(t) = e^{\mu t},$ i.e. there exists a critical exponent $\mu^*$ such that if $\mu > \mu^*$ then all non-negative solutions blow-up in finite time and if $\mu \leq \mu^*$ 
there exists non-negative global solutions for small initial 
data. One of the main objectives of this article is to find an appropriate nonlinearity in $u$ so that \eqref{abs:eqn} with the power weight $h(t) = t^q$ does exhibit Fujita phenomena. In the remaining part of this article, we study Fujita phenomena 
for exponential nonlinearity in $u.$  We 
further generalize some of these results to Cartan-Hadamard manifolds.
 
 %exponential type, in s-variable  as well in time, we show existence of global solutions and blow-up of solutions for certain range of parameters, depending 
%on the bottom of spectrum of $-\Delta_{\mathbb{H}^N}.$ Furthermore we consider logarithmic type nonlinearity for $f(s, t)$ in  s-variable and  power type 
%nonlinearity in time variable and we establish Fujita-type phenomenon for such $f.$  

\end{abstract}
\maketitle
%\tableofcontents

\section{Introduction}

 In this paper we study the global existence and finite time blow-up of solutions to some nonlinear heat equation on the hyperbolic space (the simplest example of Cartan-Hadamard manifolds).  In particular, we consider the following Cauchy problem on the Hyperbolic space $\mathbb{H}^{n}$ of dim $n \geq 2$
 \begin{align}\label{eqn:main}
\left\{\begin{array}{ll}
\partial_{t}u=\Delta_{\mathbb{H}^{n}} u+ f(u, t) &\hbox{ in }~ \mathbb{H}^{n}\times (0, T),\\
\\
\quad u =u_{0}\in C(\mathbb{H}^{n}) \cap L^{\infty}(\mathbb{H}^{n}) &\hbox{ in }~ \mathbb{H}^{n}\times \{0\},
\end{array}\right.
\end{align}
where  $\Delta_{\mathbb{H}^{n}}= \divergence(\nabla \cdot)$ is the Laplace-Beltrami operator on $\mathbb{H}^{n}, u_0 \geq 0$ and  the nonlinearity $f$ is prescribed as follows : 

\begin{enumerate}
\item[(I)] In the first  case,  the nonlinearity is of the form $f(s, t) := t^q g(s)$ where $q > 0$ and  $g : [0, \infty) \rightarrow [0, \infty) $ is a convex function satisfying 
\begin{align}\label{nonlinearity2}
g(s) :=
\left\{\begin{array}{ll}
  s |\ln s|^{-\alpha}, & \mbox{for}  \ s \sim 0 \\  
   h(s),  & \mbox{for} ~ s>  \frac{1}{2},
\end{array}\right.
\end{align}
where $\alpha > 0, h(s) \geq  \kappa s^2,$ for some $\kappa > 0.$ We shall call this {\rm Type-I nonlinearity}.

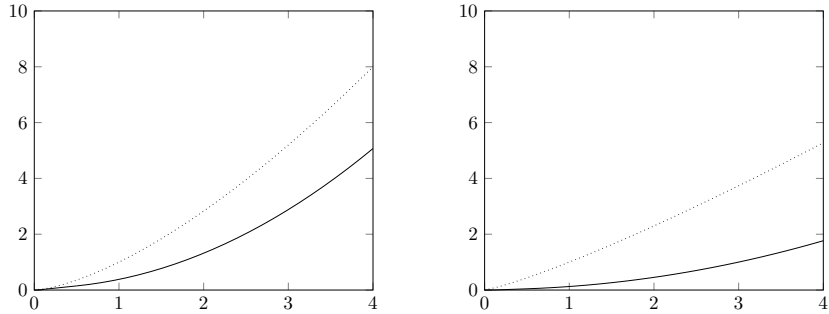
\begin{figure}[H]
  \centering
\begin{subfigure}{0.2\textwidth}
\begin{tikzpicture}[scale=0.65]
 \begin{axis}[
    xmin = 0, xmax = 4,
    ymin = 0, ymax = 10]
    \addplot[
        domain = 0: 1/(exp(1)*exp(1)),
    ] { x/((ln(1/x))*(ln(1/x)))};
   %exp(-x/10)*( cos(deg(x)) + sin(deg(x))/10 )};
     \addplot[
        domain = 1/(exp(1)*exp(1)):0.5,
    ] {(1/4)*(1/(exp(1)*exp(1))) + (5/16)*(x - 1/(exp(1)*exp(1)))};
    \addplot[
        domain = 0.5:4,
    ] {(5/16)*(x*x) + (5/(2^6) - (exp(-2))/(2^4))};
    \addplot[ dotted,
        domain = 0:4,
    ] {x^(1.5)};
    \end{axis}
  \end{tikzpicture}
 \end{subfigure}
 \quad \qquad \qquad \qquad 
  \begin{subfigure}{0.2\textwidth}
 \begin{tikzpicture}[scale=0.65]
 \begin{axis}[
    xmin = 0, xmax = 4,
    ymin = 0, ymax = 10]
    \addplot[
        domain = 0: 1/(exp(1)*exp(1)),
    ] { x/((ln(1/x)^5)};
   %exp(-x/10)*( cos(deg(x)) + sin(deg(x))/10 )};
     \addplot[
        domain = 1/(exp(1)*exp(1)):0.5,
    ] {(1/32)*(exp(-2)) + (7/(2^6))*(x - exp(-2))};
    \addplot[
        domain = 0.5:4,
    ] {(7/(2^6))*(x^2) - (5/(2^6))*(exp(-2)) + (7/(2^8))};
    \addplot[ dotted,
        domain = 0:4,
    ] {x^(1.2)};
    \end{axis}
  \end{tikzpicture}
 \end{subfigure}
 \caption{The above pictures demonstrate the graph of $g$ for $\alpha = 2$ (left) and $\alpha = 5$ (right) respectively. For $s \leq e^{-2},g$ behaves like $s|\ln s|^{-\alpha}$ and for $s>0.5, g$ dominates $\kappa s^2$ for some $\kappa >0.$ For $e^{-2} \leq s \leq 0.5, g$ grows linearly adhering the convexity structure. The dotted lines are the graphs of $s^{p}$ for $p>1$ but close to $1.$}
 \end{figure}
 
\item[(II)] In the second case,  we consider $f : [0, \infty) \times [0, \infty) \rightarrow \mathbb{R}$ defined by
\begin{equation}\label{nonlinearity1}
f(s, t) := e^{\mu t}s\big( e^{\beta s^{p}}-1\big),
\end{equation}
where $\mu,\beta,p > 0$ are constants. We shall call this {\rm Type-II nonlinearity}.
\end{enumerate}

We study the global existence versus finite-time blow-up of solutions to equation \eqref{eqn:main} corresponding to continuous bounded initial data and for the nonlinearities mentioned above.
 In particular,  we study appropriate relations among the parameters $p,\alpha,\mu,q$ separating the dichotomy of global existence vs finite time blow-up. Let us now briefly mention the history and the main motivations behind considering such nonlinearities.

\medskip 

\noindent $\bullet$ \bf Cauchy Problem with type-I nonlinearity\rm. In  \cite{Fujita}, Fujita observed that the following Cauchy problem for the semi-linear heat equation posed in $\mathbb{R}^n$
\begin{align}\label{eqn:main-euc}
\left\{\begin{array}{ll}
\partial_{t}u=\Delta_{\mathbb{R}^{n}} u+ |u|^{p-1}u &\hbox{ in }~ \mathbb{R}^{n}\times (0, T),\\
\\
\quad u =u_{0}\in C(\mathbb{R}^{n}) \cap L^{\infty}_+(\mathbb{R}^{n}) &\hbox{ in }~ \mathbb{R}^{n}\times \{0\},
\end{array}\right.
\end{align}
exhibits an interesting global existence vs finite time blow-up dichotomy.
He proved that there exists a critical value 
$p^* := 1 + \frac{2}{n},$ called { \rm Fujita exponent}, such that the followings hold: 

%where $p>1$ and $u_0(x)$ is a positive  function exhibit the following properties: 

\begin{enumerate}
\item If $1 < p < p^*,$ the problem \eqref{eqn:main-euc}  does not possess nontrivial non-negative global solutions.

\medskip 

\item If $p > p^*$, the problem \eqref{eqn:main-euc} admits a non-negative global solution for {\it small initial data} $u_0.$
 \end{enumerate}

Later it was discovered that the borderline case $p = p^*$ belongs to the blow-up region, see e.g. \cite{Hayakawa, HAL} for references. A variant of \eqref{eqn:main-euc} was studied by Meier in \cite{Meier} and he found  
that if the nonlinearity $|u|^{p-1}u$ in \eqref{eqn:main-euc} is aided by a time dependent weight of the form $t^q |u|^{p-1}u, q > -1,$ then the critical Fujita exponent becomes $p^* = 1 + \frac{2(q+1)}{n}.$ 
Meier also considered the case of bounded domains 
supplemented with Dirichlet boundary condition
\begin{align}\label{eqn:main-bounded}
\left\{\begin{array}{ll}
\partial_{t}u=\Delta_{\mathbb{R}^{n}} u \, + \, t^q \, |u|^{p-1}u &\hbox{in }~ \Omega \times (0, T),\\
\quad u = 0 \quad   &\hbox{on}~ \partial \Omega \times (0, T), \\
\quad u = u_0 \geq 0 &\hbox{in}~  \Omega \times \{ 0\},
\end{array}\right.
\end{align}
 where $\Omega \subset \mathbb{R}^n$ is a bounded domain and $q >-1$. The situation for bounded domains is quite different from that of the whole Euclidean space $\mathbb{R}^n.$ In particular, there always exists a global solution if $u_0$ is 
 sufficiently small. Meier pointed out that in order 
 to exhibit Fujita phenomenon, it is necessary to consider a weight $h(t)$ which is very large at infinity, for example $h(t) = e^{\mu t}, \mu > 0.$ In connection with Meier's result, Bandle-Pozio and Tesei \cite{Bandle}, 
found out that the problem posed on the hyperbolic space exhibits identical Fujita phenomena as that of bounded domains in $\mathbb{R}^n$ except possibly at the borderline case, which is in contrast to the 
Euclidean setting, belongs the global existence region. The results of Bandle et. al. \cite{Bandle} and completed by Wang and Yin \cite{WangYin}  can be stated as follows: consider the 
Cauchy problem 
 \begin{align}\label{eqn:2}
\left\{\begin{array}{ll}
\partial_{t}u=\Delta_{\mathbb{H}^{n}} u+ h(t)u^{p} &\hbox{ in }~ \mathbb{H}^{n}\times (0, T),\\
\\
\quad u =u_{0}\in C(\mathbb{H}^{n}) \cap L^{\infty}_+(\mathbb{H}^{n}) &\hbox{ in }~ \mathbb{H}^{n}\times \{0\},
\end{array}\right.
\end{align}
Then the followings hold:
\begin{enumerate}

\item If $h(t) = t^q$ with $q > -1,$ then for small initial data $u_0$ there exist global solutions to \eqref{eqn:2}.

\item
Let $h(t) = e^{\mu t}, \mu >0$ and set  

\begin{align*}
p_{F}^{*}(\mu):=1 + \frac{\mu}{\lambda_1} := 1+\frac{4\mu}{(n-1)^{2}}, 
\end{align*}
 where $\lambda_1 := \lambda_1(\hn)$ is the bottom of the $L^2$-spectrum of the $-\Delta_{\hn},$ defined by
 \begin{equation*}\label{poin}
\lambda_1= \inf_{u \in H^{1}(\hn) \setminus \{ 0\}} \frac{\int_{\hn}
|\nabla_{\hn} u|^2 \, {\rm d}v_{\hn}}{\int_{\hn} |u|^2 \, {\rm d}v_{\hn}}=\frac{(n-1)^2}{4}.
\end{equation*}

If $p \geq p_{F}^{*}(\mu)$, solutions to \eqref{eqn:2} exist globally for sufficiently small initial data $u_{0}$.
\item
If  $p < p_{F}^{*}(\mu)$, then all non-negative solutions to \eqref{eqn:2} blow-up in finite time. 
\end{enumerate}

%%%
Recently, considerable efforts have been made by several authors (see \cite{Zhang, MMDP, PF1, PF2} and the references in therein) to extend these results to Riemannian manifolds $M,$ satisfying certain curvature bounds.  More generally, 
the question of Fujita phenomena for Porous medium equation has also drawn significant development in this direction. It is beyond our scope to give a detailed account of the results for Porous medium equation. Instead, we refer the interested readers to the manuscripts \cite{Samar, Zhang, GMP3, GMeP0} and the references quoted therein for more detailed treatment in this area.

%Recently there has been an effort to extend these results to Riemannian manifolds $M,$ under suitable curvature bounds,
%and can be found in \cite{MMDP, PF1, PF2} and the references in therein.

\medskip

Our choice of nonlinearity \eqref{nonlinearity2} stems out from the above mentioned results of \cite{Bandle}. We ask the following question: by keeping the nonlinearity as polynomial in $t,$ 
can one find a function $g$ such that 
the equation \eqref{eqn:main} with  $f(s, t) \, = \, t^q \, g(s),$ exhibit Fujita phenomenon for certain range of parameters? Interestingly,  we found out in Theorem~\ref{thm2} that, 
if $g(s)$ is of logarithmic type singularity mentioned above, then \eqref{eqn:main} exhibits Fujita phenomenon.

\medskip

\noindent $\bullet$ \bf Cauchy Problem with type-II nonlinearity\rm. On the other hand, the nonlinearity \eqref{nonlinearity1} is partly motivated from the celebrated Moser-Trudinger (M-T) inequality 
\cite{JM}. Although the (M-T) inequality 
on manifolds, in particular on the hyperbolic space has been studied extensively in recent years, let us briefly recall it on the hyperbolic space. 
Mancini-Sandeep \cite{MS1} and Adimurthi-Tintarev \cite{ADIT} in dimension $2$ and Lu-Tang \cite{LuTang} in higher dimension, proved that 
the following inequality holds true on the hyperbolic space:
\begin{theorem} (\cite{MS1}):
 Let $\mathbb{B}^n$ be the unit open ball in $\mathbb{R}^n,$ endowed with a conformal metric $h = \rho g_{e},$ where 
$g_{e}$ denotes the Euclidean metric and $\rho(x) = (\frac{2}{1-|x|^2})^2$ then

 \begin{equation}\label{mtr}
 \sup_{u \in C_{0}^{\infty}(\mathbb{B}^n), \int_{\mathbb{B}^n} |\nabla_{h} u|^n \, {\rm d}v_h  \leq 1} \int_{\mathbb{B}^n} 
\left (e^{\alpha_n u^{\frac{n}{n-1}}} - \sum_{j=0}^{n-2}\frac{1}{j !}\alpha_n^j|u|^{\frac{jn}{n-1}} \right) \, {\rm d}v_{h} < \infty, 
 \end{equation}
where $\alpha_n = n\omega_{n-1}^{\frac{1}{n-1}},~\omega_{n-1}$ is the $(n-1)$-dimensional measure of the unit sphere $\mathbb{S}^{n-1}.$
\end{theorem}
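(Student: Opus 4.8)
The plan is to exploit the \emph{conformal invariance} of the $n$-Dirichlet energy, which is precisely the feature that forces the critical constant $\alpha_n$ to coincide with Moser's Euclidean exponent. Writing $h = \rho g_e$ with $\rho(x) = \left(\frac{2}{1-|x|^2}\right)^2$, one has ${\rm d}v_h = \rho^{n/2}\,{\rm d}x$ and $|\nabla_h u| = \rho^{-1/2}|\nabla_e u|$, so that
\begin{equation*}
\int_{\mathbb{B}^n} |\nabla_h u|^n \, {\rm d}v_h = \int_{\mathbb{B}^n} \rho^{-n/2}|\nabla_e u|^n \, \rho^{n/2}\,{\rm d}x = \int_{\mathbb{B}^n} |\nabla_e u|^n \, {\rm d}x .
\end{equation*}
Hence the admissibility constraint in \eqref{mtr} is \emph{exactly} the Euclidean constraint $\int_{\mathbb{B}^n}|\nabla_e u|^n\,{\rm d}x \le 1$. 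I would then record the Taylor-tail identity
\begin{equation*}
F(t) := e^{\alpha_n t^{n/(n-1)}} - \sum_{j=0}^{n-2}\frac{\alpha_n^j}{j!} t^{jn/(n-1)} = \sum_{j \ge n-1}\frac{\alpha_n^j}{j!} t^{jn/(n-1)}, \qquad t \ge 0,
\end{equation*}
so that $F \ge 0$, $F(t) \le e^{\alpha_n t^{n/(n-1)}}$, and $F(t) \sim \frac{\alpha_n^{n-1}}{(n-1)!}\, t^n$ as $t \to 0$; in particular $F(t) \le C_1 t^n$ for $0 \le t \le 1$. The purpose of subtracting the polynomial is to tame the slowly-decaying part of the integrand, which must now be reconciled with the blow-up of $\rho^{n/2}$ near $\partial \mathbb{B}^n$, since $\int_{\mathbb{B}^n} {\rm d}v_h = \infty$.

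Next I would reduce to radial functions by symmetrization. Since equimeasurable rearrangement with respect to ${\rm d}v_h$ preserves $\int_{\mathbb{B}^n} F(|u|)\,{\rm d}v_h$ while hyperbolic Schwarz symmetrization does not increase $\int_{\mathbb{B}^n}|\nabla_h u|^n\,{\rm d}v_h$ (the P\'olya--Szeg\H{o} inequality on $\hn$), it suffices to bound \eqref{mtr} for hyperbolic-radial decreasing $u$; as geodesic balls centered at the origin of the ball model are Euclidean balls, such $u$ are decreasing functions of $r = |x|$. I would then split $\mathbb{B}^n = \{u \le 1\} \cup \{u > 1\}$ and treat the two parts separately.

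On $\{u \le 1\}$ I would use $F(u) \le C_1 u^n$ together with the $L^n$ Poincar\'e inequality on $\hn$ (available because the bottom of the spectrum is positive), namely $\int_{\mathbb{B}^n}|u|^n\,{\rm d}v_h \le \left(\frac{n}{n-1}\right)^n \int_{\mathbb{B}^n}|\nabla_h u|^n\,{\rm d}v_h \le \left(\frac{n}{n-1}\right)^n$, giving a uniform bound on $\int_{\{u\le 1\}} F(u)\,{\rm d}v_h$. On $\{u>1\}$ the essential observation is a \emph{localization}: for radial decreasing $u$ with $\omega_{n-1}\int_0^1 |u'|^n r^{n-1}\,{\rm d}r = \int|\nabla_e u|^n\,{\rm d}x \le 1$, if $u(r_0)=1$ then H\"older's inequality gives $1 = -\int_{r_0}^1 u'(r)\,{\rm d}r \le \left(\int_{r_0}^1 |u'|^n r^{n-1}\,{\rm d}r\right)^{1/n}(\ln(1/r_0))^{(n-1)/n}$, whence $\ln(1/r_0) \ge \omega_{n-1}^{1/(n-1)}$, i.e. $r_0 \le e^{-\omega_{n-1}^{1/(n-1)}} =: r_* < 1$ \emph{uniformly} in $u$. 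Thus $\{u>1\} \subseteq B_{r_*}$, on which $\rho^{n/2} \le C_0$ is bounded by a universal constant, and therefore
\begin{equation*}
\int_{\{u>1\}} F(u)\,{\rm d}v_h \le C_0 \int_{\{u>1\}} e^{\alpha_n u^{n/(n-1)}}\,{\rm d}x \le C_0 \int_{\mathbb{B}^n} e^{\alpha_n u^{n/(n-1)}}\,{\rm d}x ,
\end{equation*}
which is finite by the classical Moser--Trudinger inequality on the Euclidean ball (with exactly the exponent $\alpha_n$), since $\int|\nabla_e u|^n\,{\rm d}x \le 1$. Adding the two contributions yields the desired uniform bound.

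The hard part is the high region: a priori the exponential integrand carries the weight $\rho^{n/2}$, which fails to be integrable up to $\partial\mathbb{B}^n$, so one cannot directly invoke the Euclidean Moser--Trudinger inequality. The crux is the uniform localization $\{u>1\}\subseteq B_{r_*}$, which forces the radial reduction and hence the validity of P\'olya--Szeg\H{o} on $\hn$; verifying that this rearrangement does not increase the $n$-energy, while genuinely preserving the ${\rm d}v_h$-distribution of $u$ (so that the functional is unchanged), is where the technical care lies. By contrast, the low region is routine once the $L^n$ Poincar\'e inequality on $\hn$ is available.
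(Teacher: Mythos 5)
The paper does not actually prove this theorem: it is quoted from Mancini--Sandeep \cite{MS1} (see also \cite{ADIT, LuTang}) purely as background motivating the Type-II nonlinearity, so there is no in-paper proof to compare against. Judged on its own merits, your sketch is correct, and it is essentially the strategy of the cited literature: conformal invariance of the $n$-energy in dimension $n$ (so the constraint is exactly the Euclidean one, as you compute), hyperbolic Schwarz symmetrization to reduce to radial decreasing functions, the splitting into $\{u\le 1\}$ and $\{u>1\}$, the $L^n$-Poincar\'e inequality on $\mathbb{H}^n$ (bottom of the $n$-spectrum equal to $\left(\frac{n-1}{n}\right)^n>0$, so your constant $\left(\frac{n}{n-1}\right)^n$ is right) to absorb the Taylor tail via $F(t)\le e^{\alpha_n}t^n$ on $[0,1]$ --- note the subtracted polynomial is calibrated precisely so that the lowest surviving power is $t^n$, matching what Poincar\'e controls --- and Moser's one-dimensional H\"older computation to localize $\{u>1\}\subseteq B_{r_*}$ with $r_*=e^{-\omega_{n-1}^{1/(n-1)}}$ uniform in $u$, where the conformal factor $\rho^{n/2}$ is bounded and the classical Euclidean Moser--Trudinger inequality with the same constant $\alpha_n$ closes the estimate. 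I verified the localization: $1\le\bigl(\int_{r_0}^1|u'|^n r^{n-1}\,{\rm d}r\bigr)^{1/n}\bigl(\ln(1/r_0)\bigr)^{(n-1)/n}$ together with $\omega_{n-1}\int_0^1|u'|^n r^{n-1}\,{\rm d}r\le1$ indeed gives $\ln(1/r_0)\ge\omega_{n-1}^{1/(n-1)}$. Two points carry the technical debt and should be made explicit in a full write-up: (i) the P\'olya--Szeg\H{o} inequality for the $n$-energy on $\mathbb{H}^n$ is a genuine input (it follows from the hyperbolic isoperimetric inequality via the coarea formula, and you correctly use that the rearrangement of a compactly supported function is supported in a compact geodesic ball, hence vanishes before $r=1$); and (ii) after rearrangement $u$ is only a radial $W^{1,n}$ function, so the fundamental-theorem-of-calculus step requires the absolutely continuous radial representative, with the final bound transferred back to $C_0^\infty$ competitors by equimeasurability (which, as you say, leaves $\int F(|u|)\,{\rm d}v_h$ unchanged). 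Both are standard, and your identification of the high region --- where the non-integrable weight $\rho^{n/2}$ must be decoupled from the exponential --- as the crux is exactly right.
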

The above inequality \eqref{mtr} is sharp, in the sense that the ``critical'' constant $\alpha_n$ cannot be improved. See also
 \cite{LM} for Moser-Trudinger inequality in the higher dimensional hyperbolic space.

\medskip 

In \cite{GK}, the authors studied the existence of steady states for the equation \eqref{eqn:main}, with Moser-Trudinger type nonlinearity in dimension $2$. They obtained several existence 
results using concentration type arguments. In particular, there exists  non-negative and radially decreasing (and hence bounded) steady states. 
Drawing primary motivation from \cite{GK}, it is natural to ask whether one can study existence and blow-up of solutions for the heat equation with Moser-Trudinger type 
nonlinearity. There are quite a few articles closely related to these questions. Their primary interest lies in  more subtle question of local existence when the initial data is not bounded.
The case of power type nonlinearity, which is independent of $t$ i.e., $f(s, t) = s|s|^{p-1}$  has been extensively 
studied starting from the seminal papers of F. Weissler \cite{W1, W2, Weissler},  Brezis-Cazanave \cite{BC} and Giga \cite{GI}. They proved local existence of solutions for singular
 initial data $u_0 \in L^r(\mathbb{R}^n),$ for some $1\leq r< \infty.$ The {\rm non-uniqueness} of the solutions  have also been studied in \cite{NIS} for bounded domain 
and in \cite{TE} for the whole Euclidean space.
\medskip

 For exponential nonlinearity, Ioku in \cite{NI} proved existence of {\emph mild solutions} for 
 nonlinearity $f(s, t) \sim e^{s^2}$  when the initial data $u_0 \in \mbox{exp} L^2(\mathbb{R}^n)$. In \cite{NBE} the authors exhibited non-existence of 
 non-negative local solutions for exponential nonlinearity with Moser-Trudinger growth and for a particular choice of initial data. In particular they considered the following  nonlinearity given by 
 
 \begin{align}\label{nonlinearity21}
f(s) :=
\left\{\begin{array}{ll}
  \dfrac{1}{|s|^3}\, e^{s^2}, & \mbox{for}  \ |s| >  R \\  
   \kappa s^2,  & \mbox{if} \  |s| \leq R,
\end{array}\right.
\end{align}
 with appropriate $\kappa$ and $R$ such that $f$ is $C^1(\mathbb{R}),$ increasing on $[0, \infty)$ and convex on $\mathbb{R}.$ 
 
 \medskip
 
 One should notice that \eqref{eqn:main} is different from the questions that were treated in \cite{BC, GI, NI, NBE}.  As far as our knowledge, there is no literature in the study of \eqref{eqn:main}, when the nonlinearity  depends on time, and $s$ 
 exponentially growing. Moreover, we are interested in the (global) solutions for initial data in $L^{\infty}(\hn)$ and which exhibit Fujita phenomenon for certain range of parameters. 
 
 \medskip
 
  \noindent $\bullet$ \bf General Cartan-Hadamard manifolds\rm. 
  In addition to the hyperbolic space, following  the ideas of F.~Punzo in \cite{PF1}, we can extend the analogous results in the case of a Cartan-Hadamard manifold whose sectional curvature is bounded by a negative constant. 
    It is important to note that, except possibly at the borderline case, \it many \rm of the results of this article continue to hold true for Cartan-Hadamard manifolds with a pole, under the curvature \it bound \rm $K_R\le-c$, where $K_R$ being the sectional curvature
     in the radial direction (see Theorem~\ref{thm3} and Theorem~\ref{thm4}). We have first stated them in the special case of ${\mathbb H}^n$ for greater readability and because the results are optimal. 
   It is important to stress that such results will be shown to be valid even if the curvature is \it strictly negative and decays \rm at infinity. More precisely it can be allowed to tends to $-\infty$
  with rate $\sim - \hat C(1 + d(x, x_0)^{\gamma}),$ as the distance from a given pole tends to infinity, where  $\hat C >0, \gamma \geq 0,$ and $x_0$ being a pole. For more details we refer Section~\ref{manifolds}.

 \medskip
 
  \noindent $\bullet$ \bf Further results on general manifolds and comparison to the literature\rm. 
   Recently, Grillo, Meglioli and Punzo \cite{GMeP} considered a similar question on a general manifold namely, is it always necessary to amplify the non-linearity in $t$ to observe Fujita phenomena ? Their main result shows this is not always the case. In fact, they considered a time independent sources and the underlying manifold is assumed to be complete, non-compact, stochastically complete, with positive bottom of $L^2$ spectrum $\lambda_1(M)$ and satisfies Faber-Krahn inequality. There are several equivalent criterion for the Faber-Krahn inequality to hold \cite{Gri}. For example, the validity of Sobolev inequality or the point wise on-diagonal bound on the heat kernel $\mathcal{K}(t,x,x) \leq Ct^{-\frac{n}{2}},$ for all $x \in M, t$ large implies that Faber-Krahn inequality  
 \begin{align*}
 \lambda_1(\mathcal{O}) \geq \frac{c}{( \mbox{vol}(\mathcal{O}))^{\frac{n}{2}}},  \quad \mbox{for \ all \ open \ set} \ \mathcal{O} \subset M
 \end{align*}
holds.
 
 The above assumptions on the manifold allow one to get a suitable bound on the solutions of  heat equation, more precisely, on the heat kernel, which is required to distinguish between the global existence and finite time blow-up of solutions. Under the above-mentioned assumptions on the manifold, the authors showed that for a time-independent non-linearity $f(u)$ satisfying $f(0) = 0,$ convex, increasing and $1/f$ is integrable at infinity, the Fujita phenomena holds.  The non-linearities include $f(u) = e^{\beta u} -1$ and $f(u) = \lambda u,$ if $u \in [0,1]; f(u) = \lambda u^p$ for $u >1$ and for some $p >1$ as special cases. The authors showed that if $f^{\prime}(0) > \lambda_1(M)$ then all solutions blow up in finite time. On the other hand, if $f(u) \leq \lambda u$ for $u \sim 0$ for some $\lambda \leq \lambda_1(M)$ then a global solution exists. We would like to point out that the non-linearities we consider are quite different than that of \cite{GMeP}. The time independent part of our non-linearities $f(u,t) = h(t)g(u)$ satisfies $g^{\prime}(0) = 0,$ and yet we show that aiding of suitably time-dependent weight results in observable Fujita phenomena. 
Moreover, a closer look at the results of \cite{GMeP} indicates that the results concerning type-I non-linearity of this article can be extended to manifolds satisfying Faber-Krahn inequality. We would like to express our sincere thanks to Prof. Grillo for this remark.

\medskip

The paper is organized as follows: in Section~\ref{pre}, we introduce some of the notations and state our main results on the hyperbolic space, i.e., existence and blow-up of solutions. Furthermore, 
we provide definitions of different notion of solutions (classical, weak and mild solutions) and discuss their equivalence. In Section \ref{manifolds}, we state our geometric preliminaries, assumptions 
and our results on general Cartan-Hadamard manifolds.
 Section~\ref{local} is devoted to the proofs of local existence of solutions and Theorem~\ref{thm1}.
  While Section~\ref{mainthm2} contains the proofs of the results stated in Theorem~\ref{thm2} and Section~\ref{mainthm3and4} contains the proof of  Theorem~\ref{thm3} and Theorem~\ref{thm4}.
  Finally in Section~\ref{concluding remarks},  in the concluding remark we pose an open question in the limiting case for dimension $n =3.$

%%%%%%%%%%%%%%%%%%%%%%%%%%%%%%%%%%%%%%%%%%%%%%%%%%%%%%%%%%%%%%%%%%%%%%%%%%%%%%%%%%%%%%%%%%%%%%%%%%%%%%%%%%%%%%%%%%%%%%%%%%%

\section{Preliminaries and Statement of Main results}\label{pre}
%of the some generalizations of our main results to Cartan-Hadamard manifolds.

 In this section, we will introduce some of the notations and definitions used in this
 paper and statement of main results.

\subsection{Basic notions of the Hyperbolic space} The hyperbolic $n$-space is a $n$-dimensional complete, non compact Riemannian manifold having constant sectional curvature equal to $-1.$ 
There are several models for the hyperbolic  $n$-space. In this article we will only work with the ball model: the unit ball $\mathbb{B}^n := \{x \in \mathbb{R}^n: |x|^2<1\}$ given with the Riemannian metric
\begin{align*}
{\rm d}s^2 = \left(\frac{2}{1-|x|^2}\right)^2 \, {\rm d}x^2
\end{align*}
constitute the ball model for the hyperbolic $n$-space, where ${\rm d}x$ is the standard Euclidean metric and $|x|^2 = \sum_{i=1}^nx_i^2$ is the 
usual Euclidean length. The Laplace-Beltrami operator and hyperbolic volume element is given by 
\begin{align*}
\Delta_{\hn} = \left(\frac{1-|x|^2}{2}\right)^2\Delta + (n-2)\left(\frac{1-|x|^2}{2}\right)x \cdot \nabla,
\end{align*}
$ {\rm d}v_{\hn} = \left(\frac{2}{1-|x|^2}\right)^2 \, {\rm d}x,$ where $ {\rm d}x$ is the Lebesgue measure. The hyperbolic distance between two points $x,y \in \mathbb{H}^n$ will be denoted by $ \mbox{d}(x,y).$ 
The distance from a fixed point $x$ to the origin can be expressed by
\begin{align*}
\mbox{d}(x,0) = \ln \left(\frac{1+|x|}{1-|x|}\right).
\end{align*}
The geodesic ball of radius $r$ centered at the origin is given by $B_{\hn}(0,r) := \{x \in \hn \ | \ \mbox{d}(x,0)<r\}.$ For details on the hyperbolic space we refer to \cite{RAT}.

\medskip 
Let $\mathcal{K}(x,y,t)$ be the (positive minimal) heat kernel for $- \Delta_{\hn}.$ We recall  from \cite{Davies} that  $\mathcal{K}$ satisfies the 
following estimates: for $n \geq 2,$ there exist
some positive constants $A_n$ and $B_n$ such that
\begin{equation}\label{heat-estimates}
A_n h_n(x, y,t) \leq \mathcal{K}(x, y,t) \leq B_n h_n(x, y,t) \quad \mbox{for all} \ t > 0 \ \mbox{and} \ x, y \in \hn,
\end{equation}
where $h_n (x, y,t)$ is given by
\begin{equation}\label{hN}
h_n(x, y,t) := h_n(r,t) = (4 \pi t)^{-\frac{n}{2}} e^{- \frac{(n-1)^2 t}{4} - \frac{(n-1) r}{2} - \frac{r^2}{4t}} ( 1 + r + t)^{\frac{(n-3)}{2}}(1 + r),
\end{equation}
where $r = \mbox{d}(x, y).$

\medskip

It is known that for the hyperbolic space there exists a classical bounded positive solution to the eigenvalue problem
\begin{align*}
-\Delta_{\hn} \phi_1 = \lambda_1 \phi_1, \ \mbox{in} \ \hn.
\end{align*}
The function $\phi_1$ may not be in $L^{2}(\hn),$ which is why it is called a \emph{generalized eigen function}. It is well  
 known from \cite[Section 3.3]{BGGV} that, the corresponding \emph{generalized eigen function} satisfies the following estimate: 
\begin{equation}\label{bound}
\alpha (1 + r)e^{-\frac{(N-1)}{2}r}  \leq \phi_1(x) \leq \beta (1 + r)e^{-\frac{(N-1)}{2}r} \quad \text{for all } r > 0
\end{equation}
and for some $\alpha,\beta>0$ and $r := d(x, x_0),$ where $x_0$ being a  pole on the hyperbolic space. 

 \medskip
 
Our main theorems are stated below.
 
\begin{theorem}\label{thm1}
Consider equation \eqref{eqn:main} with $f$ is of the form \eqref{nonlinearity1} and $\beta>0$ fixed.  

\begin{enumerate}

\item[(a)] If $\mu \leq  p\lambda_1,$ then there exists a constant $C>0$ such that for any non-negative 
 $u_{0}\in L^{\infty}(\mathbb{H}^{n})$ satisfying $u_{0}\leq C \phi_1,$
 there exists a global in time solution to \eqref{eqn:main} corresponding to the initial data $u_{0}.$
 
 \medskip 
 
 \item[(b)] If $\mu > p\lambda_1,$ then all non-negative nontrivial solutions to \eqref{eqn:main} blows-up in finite time.
\end{enumerate}

\end{theorem}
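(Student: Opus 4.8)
The plan is to treat the two parts by dual methods: for the global-existence statement (a) I would construct an explicit global supersolution and invoke the comparison principle together with the local existence theory established in Section~\ref{local}, while for the blow-up statement (b) I would run a Kaplan-type eigenfunction argument that reduces the PDE to a scalar differential inequality blowing up in finite time. Throughout, the decisive feature is that the bottom of the spectrum $\lambda_1=(n-1)^2/4$ and the generalized ground state $\phi_1$ of \eqref{bound} make $e^{t\Delta_{\hn}}\phi_1=e^{-\lambda_1 t}\phi_1$, so a solution decaying like $e^{-\lambda_1 t}$ turns the weight $e^{\mu t}$ in \eqref{nonlinearity1} into a factor $e^{(\mu-p\lambda_1)t}$ once combined with the $p$-th power of the nonlinearity; the sign of $\mu-p\lambda_1$ is exactly the dichotomy.

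For part (a) with the strict inequality $\mu<p\lambda_1$, I would seek a supersolution of the separated form $\bar u(x,t)=\varepsilon e^{-\sigma t}\phi_1(x)$ with $\sigma:=\mu/p\in(0,\lambda_1)$. Using $-\Delta_{\hn}\phi_1=\lambda_1\phi_1$ one gets $\partial_t\bar u-\Delta_{\hn}\bar u=\varepsilon(\lambda_1-\sigma)e^{-\sigma t}\phi_1>0$, while boundedness of $\phi_1$ gives $f(\bar u,t)\le K e^{\mu t}\bar u^{p+1}\le KM^{p}\varepsilon^{p+1}e^{(\mu-(p+1)\sigma)t}\phi_1$, where $M=\|\phi_1\|_\infty$ and $K\to\beta$ as $\varepsilon\to0$ (since $e^{\beta z}-1\le Kz$ on $0\le z\le(\varepsilon M)^p$). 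The choice $\sigma=\mu/p$ equates the two time-exponents, so the supersolution inequality collapses to $\lambda_1-\sigma\ge KM^p\varepsilon^p$, valid for $\varepsilon$ small; taking $C=\varepsilon$ and comparing with any $u_0\le C\phi_1$ yields a global solution.

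The borderline $\mu=p\lambda_1$ is where this ansatz fails, and I expect it to be the main obstacle of the theorem: now $\sigma=\lambda_1$ forces $\lambda_1-\sigma=0$, and in the Duhamel formulation the forcing $e^{-\lambda_1 s}\phi_1^{p+1}$ is resonant with the semigroup decay $e^{-\lambda_1(t-s)}$, producing a spurious linear-in-$t$ growth under the crude bound $\phi_1^{p+1}\le M^p\phi_1$. To close this case I would instead use the sharp kernel bounds \eqref{heat-estimates}--\eqref{hN}: for fixed spatial points $h_n(r,t)\sim t^{-3/2}e^{-\lambda_1 t}$, so the hyperbolic semigroup carries an extra \emph{integrable} polynomial decay beyond $e^{-\lambda_1 t}$. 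Combining this with the faster spatial decay of $\phi_1^{p+1}$ from \eqref{bound}, I expect the integral $\int_0^t e^{(t-s)\Delta_{\hn}}\big(e^{-\lambda_1 s}\phi_1^{p+1}\big)\,{\rm d}s$ to be bounded by $Ce^{-\lambda_1 t}\phi_1$ uniformly in $t$, which permits a monotone iteration, or a contraction in the weight $e^{-\lambda_1 t}\phi_1$, and hence a global solution for small data.

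For part (b), since $\phi_1\notin L^1(\hn)$ I cannot test against $\phi_1$ directly, so I would exhaust $\hn$ by geodesic balls $B_R:=B_{\hn}(0,R)$ and use the first Dirichlet eigenpair $(\lambda_{1,R},\phi_{1,R})$ of $-\Delta_{\hn}$ on $B_R$, normalized by $\int_{B_R}\phi_{1,R}\,{\rm d}v_{\hn}=1$ with $\phi_{1,R}>0$, recalling $\lambda_{1,R}\downarrow\lambda_1$ as $R\to\infty$. Setting $F_R(t):=\int_{B_R}u(\cdot,t)\,\phi_{1,R}\,{\rm d}v_{\hn}$ for a nontrivial nonnegative solution and integrating the equation by parts (the boundary term $-\oint_{\partial B_R}u\,\partial_\nu\phi_{1,R}$ is nonnegative because $u\ge0$ and $\partial_\nu\phi_{1,R}\le0$), then using $e^{\beta s^p}-1\ge\beta s^p$ and Jensen's inequality for the convex map $s\mapsto s^{p+1}$, I obtain $F_R'\ge-\lambda_{1,R}F_R+\beta e^{\mu t}F_R^{p+1}$. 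The substitution $G:=e^{\lambda_{1,R}t}F_R$ removes the linear term and gives $G'\ge\beta e^{(\mu-p\lambda_{1,R})t}G^{p+1}$. Fixing $R$ large enough that $\mu>p\lambda_{1,R}$ (possible since $\mu>p\lambda_1$), the coefficient is nondecreasing, and integrating $\tfrac{d}{dt}G^{-p}\le-p\beta e^{(\mu-p\lambda_{1,R})t}$ forces $G^{-p}$ to reach $0$ in finite time; as a nontrivial $u$ is strictly positive for $t>0$ we have $G(t_0)>0$ for some $t_0$, so running the inequality from $t_0$ makes $G$, hence $F_R\le\|u(\cdot,t)\|_\infty$, blow up, giving finite-time blow-up of $u$.
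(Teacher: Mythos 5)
Your blow-up argument (part (b)) is correct but genuinely different from the paper's. You run Kaplan's method: testing against the normalized Dirichlet eigenfunctions $\phi_{1,R}$ of geodesic balls, discarding the nonnegative boundary term, keeping only the first term $\beta s^{p+1}$ of the exponential, applying Jensen, and using $\lambda_{1,R}\downarrow\lambda_1$ to fix $R$ with $\mu>p\lambda_{1,R}$; the resulting ODE inequality $G'\ge \beta e^{(\mu-p\lambda_{1,R})t}G^{p+1}$ blows up since the coefficient is bounded below. The paper instead follows \cite{Bandle}: it sets $\Phi(x,t)=\int_{\hn}\mathcal{K}(x,y,T-t)u(y,t)\,{\rm d}v_{\hn}(y)$, uses stochastic completeness plus Jensen to get $\partial_t\Phi\ge e^{\mu t}\Phi(e^{\beta\Phi^p}-1)\ge \frac{\beta^k}{k!}\Phi^{pk+1}e^{\mu t}$ for \emph{every} $k$, integrates each of these, sums the resulting family of inequalities over $k$, and contradicts the heat-kernel lower bound $\Phi(x,0)\ge C(x)T^{-3/2}e^{-\lambda_1 T}$ as $T\to\infty$. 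Your route avoids the kernel lower bound entirely (replacing it with the elementary facts $\lambda_{1,R}\downarrow\lambda_1$ and strict positivity of $u$ for $t>0$) and is arguably more self-contained; the paper's route avoids Dirichlet eigenvalue theory and extends directly to the Cartan--Hadamard setting of Theorem~\ref{thm3}, where the kernel asymptotics {\bf(H2)} play the role of your eigenvalue convergence. Modulo routine justification of differentiating $F_R$ and of the integration by parts for the solution class considered, your part (b) is sound.

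In part (a), however, there is a genuine gap at the borderline $\mu=p\lambda_1$, which the theorem includes. Your supersolution $\bar u=\varepsilon e^{-(\mu/p)t}\phi_1$ settles the strict case $\mu<p\lambda_1$ cleanly (and more explicitly than the paper), but for $\mu=p\lambda_1$ you only assert the expectation that
\begin{equation*}
\int_0^t e^{(t-s)\Delta_{\hn}}\bigl(e^{-\lambda_1 s}\phi_1^{p+1}\bigr)\,{\rm d}s\;\le\; C\,e^{-\lambda_1 t}\phi_1 ,
\end{equation*}
and this is precisely the hard content of the critical case, not a routine consequence of \eqref{heat-estimates}--\eqref{hN}. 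Two concrete difficulties: first, the kernel bound carries polynomial factors $(1+r+t)^{(n-3)/2}(1+r)$, and since the kernel's spatial decay rate $\frac{n-1}{2}$ is slower than that of $\phi_1^{p+1}$, the convolution inherits the kernel's exponential rate but can pick up extra polynomial-in-$r$ growth relative to $\phi_1$, so the weight $e^{-\lambda_1 t}\phi_1$ need not reproduce itself in your proposed iteration; second, for $p\le 1$ one has $\phi_1^{p+1}\notin L^1(\hn)$, so the small/large time splitting and the spatial integrals must be handled with care (and in $n=2$ the exponent $\frac{n-3}{2}$ is negative). The paper sidesteps all of this with a short reduction you did not find: Step 1 shows by scaling ($u=(\beta_0/\beta)^{1/p}v$) that global existence for a single $\beta_0>0$ implies it for every $\beta>0$; Step 2 observes that for $\mu\le p\lambda_1$ the bounded global solution $w$ of the power problem \eqref{eqn:3} --- whose existence, \emph{including the borderline case}, is exactly the result of \cite{Bandle} completed by \cite{WangYin} --- is a supersolution of the exponential problem once $0<\beta_0<\bigl(1+\sup_{t>0}\|w(\cdot,t)\|_{L^\infty}^p\bigr)^{-1}$, via $e^{\beta_0 s}\le 1+s$ on $0\le s\le\frac{1-\beta_0}{\beta_0}$. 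So the critical case is outsourced to the known power-nonlinearity theorem rather than re-proved; to make your approach complete you would either need to carry out the delicate Duhamel estimate above (essentially re-proving Wang--Yin) or import their result as the paper does.
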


\medskip 

\begin{remark}
As we are looking for global existence to \eqref{eqn:main} for small initial data $\|u_{0}\|_{L^{\infty}}$, and since $u\big( e^{\beta u^{p}}-1\big) \sim\beta u^{p+1}$ for $u$ small, 
it is expected that one gets global existence vs blow-up results for
  \eqref{eqn:main} similar to  \eqref{eqn:2} with $p_{F}^{*}(\mu)=p+1$. In  the above Theorem~\ref{thm1},  we answer in more detail this question by proving 
   existence of solutions and blow-up results in the spirit of Fujita \cite{Fujita} and on the approach exploited by Meier in  \cite{Meier}.

\end{remark}

\medskip

It follows from the the works of \cite{Bandle} and the previous section that the power nonlinearity in time $h(t) = t^q$ is 
subcritical for either the power nonlinearity $s^p$ or the exponential nonlinearity $s(e^{\beta s^p} - 1)$ in the hyperbolic space. 
 Next we shall consider the question of what is an appropriate nonlinearity in $s$ so that $h(t) = t^q$ is indeed becomes
  critical for some $q.$ In other words, find an appropriate nonlinearity $g(s)$  so that the Fujita phenomena exhibits in the
   problem. This led us to the second theorem. 

\medskip

We will see that the  role of dimension comes into play when the  global existence  and finite time blow-up are concerned. In fact, 
we will see that if $q > \alpha,$  all the solutions to \eqref{eqn:main} with $f$ as stated in \eqref{nonlinearity2} blows-up in finite 
time while for $q = \alpha$ blow-up occurs in dimension $2$ and global existence in dimension $n \geq 4.$ Moreover, these exponents are \emph{optimal}. The phenomenon is still not clear in dimension $3$ which 
remains an \emph{open question}. See concluding remark at the end of this article.

To be precise, we prove the following

\medskip

\begin{theorem}\label{thm2}

Consider equation \eqref{eqn:main} with $f$ is of the form \eqref{nonlinearity2}. For a given  $q> 0$ and  $\alpha >0,$ there hold
\begin{enumerate}

\item[(a)]  If $q < \alpha,$ then there exists a constant $C$ such that for any non-negative data $u_0 \in L^{\infty}(\hn)$ satisfying $u_0 \leq C\phi_1$, 
\eqref{eqn:main} admits a global solution corresponding to the initial data $u_0.$

\medskip 

\item[(b)] If $q > \alpha,$ all non-negative solutions blow-up in finite time.

\medskip 

\item[(c)]  If $q = \alpha$ then
\begin{itemize}
\item[(i)] in dimension $n = 2,$  all non-negative solutions blow-up in finite time.

\medskip 
\item[(ii)] for dimension $n \geq 4,$ then there exists a constant $C$ such that for any non-negative data $u_0 \in L^{\infty}(\hn)$ satisfying $u_0 \leq C\phi_1$, \eqref{eqn:main} admits a global solution corresponding to the initial data $u_0.$ 
\end{itemize}

\end{enumerate}
\end{theorem}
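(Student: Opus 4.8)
The plan is to treat the four regimes by reducing each to a single scalar differential (in)equality, using as tools the generalized ground state $\phi_1$, the two–sided heat kernel bounds \eqref{heat-estimates}--\eqref{hN}, and the comparison principle for the monotone, convex nonlinearity $g$. Writing $S(t):=e^{t\Delta_{\hn}}$ for the (minimal, Markovian) heat semigroup, the key structural fact is that $S(t)\phi_1=e^{-\lambda_1 t}\phi_1$: indeed $e^{-\lambda_1 t}\phi_1$ is a bounded solution of the heat equation with datum $\phi_1$, and bounded solutions are unique since $\hn$ is stochastically complete. I will work in the mild formulation $u(t)=S(t)u_0+\int_0^t S(t-s)\,s^q g(u(s))\,ds$, for which sub/supersolutions are compared through Duhamel's formula and the positivity of $\mathcal K$.

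For global existence (parts (a) and (c)(ii)) I would build a \emph{product supersolution} $\bar u(x,t)=e^{-\lambda_1 t}B(t)\phi_1(x)$ with $B(0)=C$ (so $\bar u(\cdot,0)\ge u_0$ whenever $u_0\le C\phi_1$). Because $-\lambda_1$ and the eigenvalue contribution cancel, $\partial_t\bar u-\Delta_{\hn}\bar u=e^{-\lambda_1 t}B'\phi_1$, so using $g(\bar u)\le C_g\,\bar u\,|\ln \bar u|^{-\alpha}$ near $0$ and the uniform bound $|\ln\bar u|\ge \lambda_1 t-\ln B-\ln\|\phi_1\|_{\infty}$, the supersolution inequality reduces to the scalar ODE $B'=C_g\,t^q B\,(\lambda_1 t-\ln B-c)^{-\alpha}$. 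If $q<\alpha$ then $\ln B=O(t^{\,q-\alpha+1})=\smallo(t)$, so $\bar u\le e^{-\lambda_1 t+\smallo(t)}\|\phi_1\|_\infty\to0$ stays in the logarithmic regime and is a genuine global bounded supersolution for every $n$, proving (a). If $q=\alpha$ one seeks $B\sim e^{\gamma t}$, which forces $\gamma(\lambda_1-\gamma)^\alpha=C_g$; this has a root $\gamma\in(0,\lambda_1)$ (hence a decaying supersolution) exactly when $\lambda_1\ge C_g^{1/(\alpha+1)}\,\frac{\alpha+1}{\alpha^{\alpha/(\alpha+1)}}$. Since $\max_{\alpha>0}\frac{\alpha+1}{\alpha^{\alpha/(\alpha+1)}}=2$ (attained at $\alpha=1$) while $\lambda_1=\frac{(n-1)^2}{4}\ge \frac94>2$ for $n\ge4$, the condition holds for all $\alpha$, giving (c)(ii).

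For blow-up (parts (b) and (c)(i)) I would use the backward kernel as a test function. Fix a pole $x_0$, assume a nontrivial solution on $[0,T]$, and put $\rho(x,t):=\mathcal K(x,x_0,T-t)$, a probability density in $x$ by stochastic completeness. With $J(t):=\int_{\hn}u(\cdot,t)\rho(\cdot,t)\,dv_{\hn}$, integrating by parts cancels the linear terms and Jensen's inequality gives $J'(t)=t^q\int g(u)\rho\,dv_{\hn}\ge t^q g(J(t))$. Separating variables in $J'\ge t^q J|\ln J|^{-\alpha}$ shows $|\ln J(t)|^{\alpha+1}\le|\ln J(0)|^{\alpha+1}-c\,t^{q+1}$, so $J$ reaches size $O(1)$ at $t_{\mathrm{bu}}\asymp|\ln J(0)|^{(\alpha+1)/(q+1)}$, after which the super-quadratic bound $g\ge\kappa s^2$ forces finite-time blow-up in bounded additional time; as $J(t)\le\|u(\cdot,t)\|_{L^\infty}$ this contradicts global existence once $t_{\mathrm{bu}}<T$. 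The lower kernel bound \eqref{heat-estimates} yields $J(0)=(S(T)u_0)(x_0)\gtrsim T^{-3/2}e^{-\lambda_1 T}$ for any nontrivial $u_0\ge0$ (the $n$-dependent powers $T^{-n/2}$ and $T^{(n-3)/2}$ combine to $T^{-3/2}$), so $|\ln J(0)|\le\lambda_1 T+\tfrac32\ln T+C$ and $t_{\mathrm{bu}}\lesssim(\lambda_1 T)^{(\alpha+1)/(q+1)}$. This is $\smallo(T)$ precisely when $q>\alpha$ (every dimension), and in the critical case $q=\alpha$ it equals $\asymp\lambda_1 T$, which is $<T$ iff $\lambda_1<1$, i.e. $(n-1)^2<4$, i.e. $n=2$; this gives (b) and (c)(i).

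The delicate point, and the main obstacle, is the critical case $q=\alpha$, where the two sufficient conditions above — global existence for $\lambda_1\ge\frac{\alpha+1}{\alpha^{\alpha/(\alpha+1)}}$ and blow-up for $\lambda_1<1$ — leave the window $\lambda_1\in[1,\frac{\alpha+1}{\alpha^{\alpha/(\alpha+1)}})$ undecided; since $\lambda_1=(n-1)^2/4$ only takes the value $1$ at $n=3$, this is exactly the open case, while $n=2$ and $n\ge4$ fall on opposite sides. Making this rigorous requires carrying the sharp dimension-dependent constants through \eqref{heat-estimates}, in particular controlling the effective constant $C_g$ so that the existence threshold still sits below $\tfrac94$ for $n\ge4$, and verifying that the blow-up constant is genuinely governed by $\lambda_1$ versus $1$ at leading order. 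The remaining technical work is standard but must be checked: validity of the comparison principle for this nonlinearity on the noncompact $\hn$, integrability justifying the backward-kernel computation and Jensen step, and the a priori guarantee that the constructed supersolution remains in the regime $\bar u<1/2$ where the logarithmic form of $g$ is available.
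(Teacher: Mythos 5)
Your proposal is correct and follows essentially the same route as the paper: a separable supersolution $\theta e^{(\delta-\lambda_1)t}\phi_1$ (your $e^{-\lambda_1 t}B(t)\phi_1$ with $B\sim e^{\gamma t}$ is the same ansatz, and your solvability condition $\gamma(\lambda_1-\gamma)^{\alpha}\ge 1$ is algebraically equivalent to the paper's requirement $\delta+\delta^{-1/\alpha}\le\lambda_1$, which the paper settles at $q=\alpha$, $n\ge4$ by simply taking $\delta=1$ since $\lambda_1\ge\tfrac94>2$), together with the identical blow-up mechanism: the backward-kernel average $\Phi(x,t)=\int\mathcal K(x,y,T-t)u(y,t)\,{\rm d}v_{\hn}$, Jensen's inequality giving $\partial_t\Phi\ge t^q g(\Phi)$, separation of variables using $g(s)=s|\ln s|^{-\alpha}$ near $0$ and $1/g$ integrable at infinity, and the lower bound $\Phi(x,0)\ge C(x)T^{-3/2}e^{-\lambda_1 T}$, yielding contradiction for $q>\alpha$ and, at $q=\alpha$, the dichotomy $\lambda_1<1$ (blow-up, $n=2$) versus $\lambda_1\ge 1$ with the case $n=3$ left open exactly as in the paper.
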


\medskip 

\begin{remark}
It follows from the proof of Theorem \ref{thm2} that the assumption $h(s) \geq \kappa s^2$ in \eqref{nonlinearity2} can be relaxed by simply assuming $\frac{1}{h} \in L^1(\frac{1}{2}, \infty).$
This suggests that the effect of Fujita phenomenon arises from nonlinearity defined near $s \sim 0,$ which is not so evident from all the previously chosen nonlinearity in the literature. 
\end{remark}

\medskip 

\subsection{Classical, mild and weak solutions} In this subsection we recall three different notions of solution to the problem \eqref{eqn:main}.  Set $0< T <\infty.$

\begin{definition}\label{defi1}
A function $u \in C^{2, 1}(\hn \times (0, T]) \cap C(\overline{\hn \times (0, T]})$ is called a \emph{classical solution} of the
problem \eqref{eqn:main} in $[0, T]$ if 
\begin{align}\label{eq:defi1}
\left\{\begin{array}{ll}
\partial_{t}u=\Delta_{\mathbb{H}^{n}} u+ f(u, t) &\hbox{ in }~ \mathbb{H}^{n}\times (0, T],\\
\\
\quad u(x, 0) =u_{0}(x)  &\hbox{ in }~ \mathbb{H}^{n}.
\end{array}\right.
\end{align}
\end{definition}

 \begin{definition}\label{defi2}
A function $u \in C(\hn \times [0, T]) \cap L^{\infty}(\hn \times (0,T))$ is called a \emph{mild solution} of  theproblem \eqref{eqn:main} if the identity
$$
u(x, t) = \int_{\hn} \mathcal{K}(t, x, y) u_0(y) \, {\rm d}v_{\hn} + \int_{0}^{t} \int_{\hn} \mathcal{K}(t-s, x, y) f(u(y,s), s) \, {\rm d}v_{\hn} \, {\rm d}s,
$$
holds for every $t \in [0, T]$, where $\mathcal{K}$ is the positive minimal heat kernel for $-\Delta_{\hn}.$ 
\end{definition}

\begin{definition}\label{defi3}
A function $u \in C(\hn \times [0, T]) \cap L^{\infty}(\hn \times (0,T))$ is called a \emph{continuous weak solution} of the problem \eqref{eqn:main} if for any 
$T_1 \in (0, T]$ 

$$
-  \int_{0}^{T_1}\int_{\hn} u \{ \Delta_{\hn} \psi + \psi_t \} \, {\rm d}v_{\hn} {\rm d}t = \int_{\hn} u_0 \, \psi(., 0) \, {\rm d}v_{\hn} + 
 \int_{0}^{T_1} \int_{\hn}f(u, t) \, \psi \, {\rm d}v_{\hn} \, {\rm d}t,
$$
holds for every $\psi \in C_c^{2, 1}(\hn \times [0, T_1)).$ 
\end{definition}

In this sequel we will be using the following definition of blow-up of solutions:

\begin{definition}
Let $u$ be a continuous weak solution of the problem \eqref{eqn:main} for $t \in [0, T),$ where $T > 0$ is the maximal time of existence of $u.$ If $T$ is finite we have 
$$
\lim_{t \rightarrow T^-} \, ||u(., t)||_{\infty} = \infty.
$$
 $T$ is called the blow-up time of the solution and $u$ is said to blow up in finite time.  If there is no finite time blow-up then the solution exists for all time $t>0$ and we call it a  \it global solution \rm.
\end{definition}

We conclude this section by recalling the following well known results. For the nonlinearities considered in this article, a mild solution is a classical solution and hence a weak solution. It turns out that that in the class of bounded solutions all notions are equivalent.

\begin{theorem} \label{weak=mild}
Let $u$ be a continuous bounded weak solution of \eqref{eq:defi1} in $Q_{T}.$ Then $u$ is a mild solution to \eqref{eq:defi1} in $Q_T.$
\end{theorem}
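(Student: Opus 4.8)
The plan is to recover the Duhamel (mild) representation pointwise by testing the weak formulation against the heat kernel run backward in time, exploiting that such a kernel is a caloric test function away from its singularity. Fix a point $(x_0, t_0)$ with $t_0 \in (0, T_1)$; I will show the mild identity of Definition~\ref{defi2} holds at $(x_0, t_0)$, and since $(x_0, t_0)$ is arbitrary this proves the theorem. First I would introduce the regularized kernel $\psi_\varepsilon(y, s) := \mathcal{K}(t_0 + \varepsilon - s, x_0, y)$ for a small parameter $\varepsilon > 0$. Because $\mathcal{K}$ solves the heat equation and is symmetric, $\psi_\varepsilon$ satisfies the backward equation $\partial_s \psi_\varepsilon + \Delta_{\hn} \psi_\varepsilon = 0$ on $\hn \times [0, t_0]$, and the shift by $\varepsilon$ keeps its time argument bounded below by $\varepsilon$, so $\psi_\varepsilon$ is smooth and bounded there with no singularity.

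Since $\psi_\varepsilon$ is not compactly supported, I would multiply it by a spatial cutoff $\chi_R$ (equal to $1$ on the geodesic ball $B_{\hn}(x_0, R)$, vanishing outside $B_{\hn}(x_0, 2R)$, with controlled first and second derivatives) and, to handle the fact that $\psi_\varepsilon$ does not vanish at $s = t_0$, by a temporal cutoff $\theta_\rho(s)$ equal to $1$ on $[0, t_0]$ and supported in $[0, t_0 + \rho)$. The product $\psi_\varepsilon \chi_R \theta_\rho$ is an admissible test function in Definition~\ref{defi3}. Inserting it and using $\partial_s \psi_\varepsilon + \Delta_{\hn}\psi_\varepsilon = 0$, the bulk term drops out, and what survives is: the initial term $\int_{\hn} u_0(y)\,\mathcal{K}(t_0 + \varepsilon, x_0, y)\chi_R(y)\,{\rm d}v_{\hn}$, the source term $\int_0^{t_0}\!\int_{\hn} f(u,s)\,\mathcal{K}(t_0 + \varepsilon - s, x_0, y)\chi_R(y)\,{\rm d}v_{\hn}\,{\rm d}s$, the cutoff-error terms coming from $\nabla\chi_R$ and $\Delta\chi_R$ (supported in the annulus $B_{\hn}(x_0, 2R)\setminus B_{\hn}(x_0, R)$), and the term produced by $\theta_\rho'$.

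Then I would pass to the limits in the order $\rho \to 0$, $R \to \infty$, $\varepsilon \to 0$. As $\rho \to 0$, the $\theta_\rho'$ term converges to $\int_{\hn} u(y, t_0)\,\mathcal{K}(\varepsilon, x_0, y)\chi_R(y)\,{\rm d}v_{\hn}$, since $u\psi_\varepsilon\chi_R$ is continuous in $s$ and $\theta_\rho'$ approximates $-\delta_{t_0}$. As $R \to \infty$, the cutoff-error terms vanish: $u$ is bounded, while $\mathcal{K}$ and its gradient carry the Gaussian factor $e^{-r^2/(4(t_0+\varepsilon-s))}$ from \eqref{heat-estimates}--\eqref{hN}, which dominates the exponential volume growth of geodesic spheres on the annulus $r \in (R, 2R)$; the remaining integrals converge to their untruncated counterparts. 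Finally, as $\varepsilon \to 0$, the family $\mathcal{K}(\varepsilon, x_0, \cdot)$ is an approximate identity, so continuity of $u(\cdot, t_0)$ yields $\int_{\hn} u(y, t_0)\mathcal{K}(\varepsilon, x_0, y)\,{\rm d}v_{\hn} \to u(x_0, t_0)$; the initial term tends to $\int_{\hn}\mathcal{K}(t_0, x_0, y)u_0(y)\,{\rm d}v_{\hn}$ by continuity of the kernel in time and boundedness of $u_0$; and the source term tends to $\int_0^{t_0}\!\int_{\hn}\mathcal{K}(t_0-s, x_0, y)f(u(y,s), s)\,{\rm d}v_{\hn}\,{\rm d}s$. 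Assembling these gives exactly the mild identity at $(x_0, t_0)$.

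The main obstacle is the limit $R \to \infty$: the term involving $\nabla\chi_R \cdot \nabla\psi_\varepsilon$ requires a pointwise gradient bound on $\mathcal{K}$ compatible with \eqref{hN}, and one must verify that the Gaussian decay beats the factor $e^{(n-1)r}$ from the hyperbolic volume element on the shell $r \in (R, 2R)$. A secondary delicate point is the limit $\varepsilon \to 0$ in the source term, where $\mathcal{K}(t_0 + \varepsilon - s, x_0, \cdot)$ becomes singular as $s \to t_0$; this is handled by dominated convergence, using stochastic completeness of $\hn$ in the form $\int_{\hn}\mathcal{K}(\tau, x_0, y)\,{\rm d}v_{\hn} = 1$ together with the boundedness of $f(u, \cdot)$ on $[0, t_0]$ (a consequence of the boundedness of $u$ and continuity of $f$), so that $\int_0^{t_0}\!\int_{\hn}\mathcal{K}\,{\rm d}v_{\hn}\,{\rm d}s$ remains finite and the integrand is uniformly dominated.
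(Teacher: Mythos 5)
Your argument is sound in outline, but it follows a genuinely different route from the paper's. The paper does not prove Theorem~\ref{weak=mild} directly: it refers to \cite{Bandle}, and the proof there is a uniqueness argument built on the maximum principle stated as Lemma~\ref{uniqueness}. Concretely, one sets $w(x,t)=\int_{\hn}\mathcal{K}(t,x,y)u_0(y)\,{\rm d}v_{\hn}+\int_0^t\int_{\hn}\mathcal{K}(t-s,x,y)f(u(y,s),s)\,{\rm d}v_{\hn}\,{\rm d}s$, observes that since the source $f(u(\cdot,\cdot),\cdot)$ is a \emph{given} bounded continuous function this Duhamel integral is a bounded continuous weak solution of the inhomogeneous heat equation with data $u_0$, and then applies Lemma~\ref{uniqueness} to $\pm(u-w)$ (the growth hypothesis $Ae^{c\,d(x,0)^2}$ being trivially satisfied by boundedness) to conclude $u\equiv w$. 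Your approach instead recovers the Duhamel identity pointwise by testing Definition~\ref{defi3} against the time-reversed regularized kernel $\psi_\varepsilon\chi_R\theta_\rho$ and passing to the limits $\rho\to0$, $R\to\infty$, $\varepsilon\to0$. What your route buys is self-containedness: it never invokes the comparison principle, only kernel estimates. What the paper's route buys is precisely the avoidance of your ``main obstacle'': the term $\int u\,\nabla\chi_R\cdot\nabla\psi_\varepsilon$ cannot be integrated by parts (as $u$ is merely continuous), so your proof genuinely requires a pointwise gradient bound on $\mathcal{K}$, which is \emph{not} contained in \eqref{heat-estimates}--\eqref{hN} and which you assert rather than supply. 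This is the one real hole in the write-up.

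The hole is fillable, so I would not call the proof wrong, but you must close it explicitly: on $\hn$ one has, e.g., Hamilton-type gradient estimates for positive solutions of the heat equation, giving $|\nabla_y\mathcal{K}(\tau,x_0,y)|\leq C\tau^{-1/2}\mathcal{K}(\tau,x_0,y)\bigl(1+\log(M/\mathcal{K})\bigr)^{1/2}$ on compact time intervals, and the logarithmic loss only produces factors polynomial in $r$ and in $\tau^{-1}$, which the Gaussian factor $e^{-r^2/4\tau}$ absorbs against the volume growth $e^{(n-1)r}$ on the annulus $r\in(R,2R)$; alternatively one can quote the explicit recursion formulas for the hyperbolic heat kernel. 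Two minor points to tidy: choose $T_1\in(t_0,T]$ and $\rho<T_1-t_0$ so that $\psi_\varepsilon\chi_R\theta_\rho$ is admissible in Definition~\ref{defi3}; and your argument yields the mild identity for $t_0\in(0,T)$, so the endpoint $t_0=T$ required by Definition~\ref{defi2} should be obtained by letting $t_0\uparrow T$ and using the continuity and boundedness of $u$ together with dominated convergence on both sides of the identity.
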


The proof of Theorem \ref{weak=mild} can be found in \cite{Bandle} relying on the following maximum principle, which will also be used to prove the uniqueness of local 
weak solutions.

\begin{lemma}\label{uniqueness}
Let $v$ be a continuous weak sub-solution of the heat equation i.e
\begin{align*}
\int\int_{Q_T} v(x,t) (\partial_t\psi(x,t) + \Delta_{\hn} \psi(x,t)) \leq 0, ~\forall \psi \in C_c^{\infty}(\hn \times (0,T)), \psi \geq 0
\end{align*}
which satisfies the growth condition 
\begin{align*}
v(x,t) \leq Ae^{cd(x,0)^2}, \ for \ all \ t \in (0,T)
\end{align*}
for some constants $A,c>0$ and assume that $v(\cdot,0) \leq 0.$ Then $v \leq 0$ on $Q_T.$
\end{lemma}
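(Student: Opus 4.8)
The plan is to prove a one-sided Phragmén--Lindelöf (Tychonov-type) comparison principle adapted to the negative curvature of $\hn$. Writing $r=d(x,x_0)$, the idea is to dominate $v$ on large geodesic spheres by a carefully chosen \emph{classical} supersolution $G$ of the heat equation that blows up at a finite time $T_0$, apply the weak maximum principle on the bounded cylinders $B_{\hn}(x_0,R)\times[0,\tau]$, and then let the amplitude of $G$ tend to $0$. Because the time-step length $\tau$ will depend only on the growth constant $c$, a finite iteration over $[0,\tau],[\tau,2\tau],\dots$ covers all of $[0,T)$: at each step the hypothesis $v\le Ae^{cr^2}$ persists, and the previous step furnishes $v(\cdot,k\tau)\le 0$ as the new initial datum.

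The heart of the matter is the choice of comparison function. On $\mathbb{R}^n$ one would take $(T_0-t)^{-n/2}\exp\big(r^2/(4(T_0-t))\big)$, which solves the heat equation and dominates $e^{cr^2}$ as soon as $T_0<1/(4c)$. On $\hn$ this fails: for a radial profile $\Delta_{\hn}=\partial_{rr}+(n-1)\coth r\,\partial_r$, and the extra first-order drift $(n-1)\coth r\,\partial_r$ turns the Euclidean Gaussian into a \emph{sub}solution for large $r$. I would therefore correct it by a linear-in-$r$ exponential and set, for $T_0:=\tfrac{1}{8c}$,
\[
G(x,t):=(T_0-t)^{-n/2}\,\exp\!\Big(\frac{r^2}{4(T_0-t)}-\beta r\Big),\qquad \tfrac{n-1}{2}<\beta\le n-1 .
\]
A direct computation of $\partial_tG-\Delta_{\hn}G$ gives $G$ times $\tfrac{1}{T_0-t}\big(\tfrac{n}{2}-\tfrac12+\beta r-\tfrac{n-1}{2}r\coth r\big)+\big((n-1)\beta\coth r-\beta^2\big)$. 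Using the elementary bound $r\coth r\le 1+r$, the constants in the first bracket cancel exactly and it is bounded below by $(\beta-\tfrac{n-1}{2})r\ge 0$; using $\coth r\ge 1$, the second bracket is at least $\beta(n-1-\beta)\ge 0$. Hence $G$ is a genuine classical supersolution on all of $\hn\times[0,\tau]$ for every $\tau<T_0$. Verifying this supersolution property against the hyperbolic drift is the main obstacle, and the role of the correction $e^{-\beta r}$ with $\beta>(n-1)/2$ is precisely to absorb it.

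With $G$ in hand the comparison is routine. For fixed $\epsilon>0$ the function $w:=v-\epsilon G$ is a continuous weak subsolution of the heat equation on $B_{\hn}(x_0,R)\times[0,\tau]$. On the bottom $\{t=0\}$ we have $v\le 0\le\epsilon G$; on the lateral boundary $\{r=R\}$ the leading growth of $G$ is $\exp\big(r^2/(4(T_0-t))\big)\ge\exp\big(r^2/(4T_0)\big)=\exp(2cr^2)$, which beats $v\le Ae^{cr^2}$ once $R$ is large (the subdominant factor $e^{-\beta r}$ does not affect the quadratic comparison). Thus $w\le 0$ on the whole parabolic boundary, and the standard weak maximum principle on the bounded cylinder --- valid for a continuous weak subsolution measured against a smooth classical supersolution, and the same comparison principle underlying Theorem~\ref{weak=mild} --- gives $w\le 0$ inside. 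Fixing $(x,t)$ with $t\le\tau$, choosing $R$ large enough for each $\epsilon$, and then letting $\epsilon\to 0$ yields $v(x,t)\le 0$. Iterating the time step $\tau$ as described then delivers $v\le 0$ on all of $Q_T$.
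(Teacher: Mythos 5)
Your proof is correct, but there is nothing in the paper itself to compare it against: the paper states Lemma~\ref{uniqueness} without proof, citing Bandle--Pozio--Tesei \cite{Bandle}, where it underlies Theorem~\ref{weak=mild} and the uniqueness of local weak solutions. Your argument reconstructs, self-containedly, the Tychonov/Phragm\'en--Lindel\"of scheme behind the cited result, and the key computation checks out: with $G=(T_0-t)^{-n/2}\exp\bigl(\tfrac{r^2}{4(T_0-t)}-\beta r\bigr)$ one finds
\begin{align*}
\partial_t G-\Delta_{\hn}G
= G\left[\frac{1}{T_0-t}\left(\frac{n-1}{2}+\beta r-\frac{n-1}{2}\,r\coth r\right)
+\beta\bigl((n-1)\coth r-\beta\bigr)\right],
\end{align*}
and the elementary bounds $r\coth r\le 1+r$, $\coth r\ge 1$ make both brackets nonnegative precisely in the window $\tfrac{n-1}{2}\le\beta\le n-1$, which is nonempty for every $n\ge2$; the lateral comparison (since $1/(4T_0)=2c$ beats the growth rate $c$) and the iteration with fixed step $\tau<1/(8c)$, which uses only that $A,c$ are uniform in $t$, are likewise sound. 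The genuinely hyperbolic content --- that the Euclidean Tychonov barrier fails because of the drift $(n-1)\coth r\,\partial_r$ and must be corrected by $e^{-\beta r}$ with $\beta$ large enough to absorb it --- is exactly the right observation. One technical wrinkle you should address explicitly: $e^{-\beta r}$ is Lipschitz but not $C^1$ at the pole, so $G$ is a classical supersolution only on $(\hn\setminus\{x_0\})\times[0,\tau]$. This is harmless: for $n\ge2$ the singular term $\sim -(n-1)\beta/r$ near $r=0$ is locally integrable, contributes no Dirac mass at the pole (the flux $\sinh^{n-1}(\varepsilon)\,\partial_rG(\varepsilon)\to0$), and enters $\partial_tG-\Delta_{\hn}G$ with the favorable sign, so $G$ remains a weak supersolution on all of $\hn$, which is all the cylinder comparison requires; alternatively, replace $\beta r$ by $\beta\sqrt{1+r^2}$ and rerun the same estimates.
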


\medskip

%%%%%%%%%%%%%%%%%%%%%%%%%%%%%%%%%%%%%%%%%%%%%%%%%%%%%%%%%%%%%%%%%%%%%%%%%%%%%%%%%%%%%%%%%%%%%%%%%%%%%%%%%%%%%%%%%%%%%%%

\section{Results on Cartan-Hadamard manifolds}\label{manifolds}

Most of the results of this article can be extended to Cartan-Hadamard manifolds of dimension $\geq 2.$ A 
Cartan-Hadamard manifold $(M,g)$ is a geodesically complete, non-compact, simply connected Riemannian manifold with non-positive sectional curvature.
In the present section, we state a generalization of  Theorems~\ref{thm1}  and \ref{thm2} to more general manifolds
under suitable curvature assumptions. Denote by $K_R$ the sectional curvature in the radial direction of a Riemannian manifold with a pole $x_0$. Throughout this article, we assume the bound
\begin{equation}\label{curv}
K_R(x)\le -G(r(x))\le0\qquad  \forall x\in M
\end{equation}
holds, where $G$ is a given function and $r(x)= {\rm d}(x,x_0)$. In particular, we are assuming that $M$ is Cartan-Hadamard. We also define $\psi$ to be the solution to the Cauchy problem
\begin{equation}\label{curvmod}
\begin{cases}\psi''(r)-G(r)\psi(r)=0 \qquad r>0,
\\ \psi(0)=0,\ \psi'(0)=1.\end{cases}
\end{equation}
Clearly, by the sign assumption on $G$, $\psi$ is positive convex function, and in particular, by the initial condition we have $\psi(r)\ge r$ for all $r\ge0$. 

%Then, to each function $\psi\in C^{2}([0,\infty)),$ strictly positive on $(0, \infty)$ and such that $\psi(0) = \psi^{\prime \prime}(0)= 0$, $\psi^{\prime}(0) = 1$, we associate a family of manifolds defined as follows:
%   \begin{equation}\label{condition2}
% \mathcal M_\psi=\{ \text{$M$ Riemannian manifold with pole $x_0$}:\,    Cut \{x_0 \} = \phi \text{ and } K_{R}(x) \leq - \frac{\psi^{\prime \prime}}{\psi}   \quad \forall\, x\in M\}\,.
%   \end{equation}
To construct a positive super-solution one can use the well-known strategy of constructing barriers using Hessian comparison and equations posed on the \emph{Riemannian model} $M_{\psi}$ associated to $\psi$ 
constructed above. Namely, we consider the $N$-dimensional Riemannian manifold $M_{\psi}$ admitting a pole  $x_0$,  whose metric is given in spherical coordinates by
 \begin{equation}\label{meetric}
 {\rm d}s^2 = {\rm d}r^2 + \psi^2(r) \, {\rm d}\omega^2,
 \end{equation}
 where $ {\rm d}\omega^2$ is the standard metric on the sphere $\mathbb{S}^{N-1}$. The coordinate $r$ represents the Riemannian distance from the pole $x_0,$ see e.g. \cite{GW} for further details. For Riemannian 
 models the curvature condition in \eqref{curv} holds with an equality. Clearly, for $\psi(r)=r$ one has $M_{\psi}=\mathbb R^n$, while for $\psi(r)=\sinh r$ one has $M_{\psi}=\hn$.
 
 \medskip 
 
 Let $(M,g)$ be a Cartan-Hadamard manifold. On Cartan-Hadamard manifold $M,$ for any point $x_0 \in M,$ the cut locus of $x_0,$ $Cut(x_0)$ is empty. Thus $M$ is a manifold with pole.
The Riemannian metric $g$ in $M$ in the polar coordinates takes the form
\begin{align*}
{\rm d}s^2 = {\rm d} r^2 + a_{i,j}(r,\theta) {\rm d}\theta_{i}{\rm d}\theta_{j},
\end{align*}
where $(\theta_{1},\ldots, \theta_{n-1})$ are coordinates on $\mathbb{S}^{n-1}$ and $((a_{i,j}))_{i,j= 1, \ldots, n}$ is a positive
definite Matrix.

Let $a: = \det(a_{i,j}),$ $B(x_0,\rho) = \{ x = (r, \theta): r  < \rho \}. $ Then in $M$ we have
\begin{align*}
 \Delta_{M} = \frac{1}{\sqrt{a}} \frac{\partial}{\partial r}\left( \sqrt{a} \frac{\partial}{\partial r} \right)
+ \Delta_{\partial B(o,r)} = \frac{\partial^2}{\partial r^2} + m(r, \theta) \frac{\partial}{\partial r}
+ \Delta_{\partial B(o,r)},
\end{align*}
where $\Delta_{\partial B(x_0, r)}$ is the Laplace-Beltrami operator on the geodesic sphere $\partial B(x_0,r)$ and $m(r, \theta)$
is a smooth function on $(0, \infty) \times \mathbb{S}^{n-1}$ which represents the mean curvature of $\partial B(x_0,r)$ in the
radial direction.

 \medskip 
 
 Let us recall a Lemma. 
 
 \begin{lemma}\label{comp}\cite{GW}
Let $M$ be a Cartan-Hadamard manifold with a pole at $x_0$ satisfying the assumption
\begin{align*}
K_{R}(x) \leq - \frac{\psi^{\prime \prime}}{\psi}\quad \forall\, x\in M.
\end{align*}
Then
 \[
 m(r, \theta) \geq (n-1) \frac{\psi^{\prime}(r)}{\psi(r)}\quad \text{for all } r>0 \text{ and } \theta\in\mathbb{S}^{n-1}.
 \]
 \end{lemma}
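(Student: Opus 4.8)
The plan is to identify the radial mean curvature $m(r,\theta)$ with the trace of the shape operator of the geodesic spheres and then run a matrix Riccati comparison against the Riemannian model $M_\psi$; this is exactly the content of the Hessian comparison theorem of Greene--Wu \cite{GW}, and the following is a sketch of the argument. First I would record that, since $M$ has a pole at $x_0$, the distance function $r(x)=\mathrm{d}(x,x_0)$ is smooth on $M\setminus\{x_0\}$ with $|\nabla r|\equiv 1$, so that $\mathrm{Hess}\,r(\partial_r,\cdot)=0$ (differentiate $|\nabla r|^2=1$). Consequently $\Delta_M r$ is the trace of $\mathrm{Hess}\,r$ restricted to the orthogonal complement $\partial_r^{\perp}$, which is precisely the shape operator $S(r)$ of the geodesic sphere $\partial B(x_0,r)$ in the radial direction. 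Since $\Delta_M r = m(r,\theta)$ (apply $\Delta_M=\partial_{rr}+m\,\partial_r+\Delta_{\partial B(x_0,r)}$ to $r$ itself), it suffices to prove the operator inequality $S(r)\ge \tfrac{\psi'(r)}{\psi(r)}\,\mathrm{Id}$ on $\partial_r^{\perp}$ and then take traces.

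Next I would set up the evolution along a unit-speed radial geodesic $\gamma$: the shape operator obeys the matrix Riccati equation $\nabla_{\partial_r}S+S^2+\mathcal R=0$, where $\mathcal R(X)=R(X,\partial_r)\partial_r$ is the directional curvature operator on $\partial_r^{\perp}$. The hypothesis $K_R\le -\psi''/\psi$ is the statement that for every $X\perp\partial_r$ one has $\langle\mathcal R X,X\rangle=K(X,\partial_r)\,|X|^2\le -\tfrac{\psi''}{\psi}|X|^2$, i.e. $\mathcal R\le -\tfrac{\psi''}{\psi}\,\mathrm{Id}$ as a symmetric operator. The model comparison operator $S_0(r):=\tfrac{\psi'(r)}{\psi(r)}\,\mathrm{Id}$ solves the same Riccati equation with curvature operator $\mathcal R_0:=-\tfrac{\psi''}{\psi}\,\mathrm{Id}$, because $\big(\tfrac{\psi'}{\psi}\big)'+\big(\tfrac{\psi'}{\psi}\big)^2=\tfrac{\psi''}{\psi}$ by the defining equation \eqref{curvmod}.

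The core step would be the Riccati comparison. Writing $D:=S-S_0$ and subtracting the two equations, the algebraic identity $S^2-S_0^2=SD+DS_0$ gives $D'=-SD-DS_0+(\mathcal R_0-\mathcal R)$, where $\mathcal R_0-\mathcal R\ge 0$ by the curvature bound. Near the pole both operators are asymptotic to $\tfrac1r\,\mathrm{Id}$ (the manifold is locally Euclidean at $x_0$ and $\psi(r)\sim r$ by the initial conditions in \eqref{curvmod}), whence $D(0^+)=0$; a standard Gronwall-type argument for this linear-in-$D$ Riccati inequality then forces $D(r)\ge 0$ for all $r>0$. Taking the trace over the $(n-1)$-dimensional space $\partial_r^{\perp}$ yields $m(r,\theta)=\mathrm{tr}\,S(r)\ge (n-1)\tfrac{\psi'(r)}{\psi(r)}$, which is the assertion.

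I expect the only delicate point to be the justification of the matrix comparison through the singular initial data at the pole, since both $S$ and $S_0$ blow up like $\tfrac1r\,\mathrm{Id}$ as $r\to 0^+$. The clean way to handle this is to run the comparison on each interval $[\varepsilon,r]$ and pass to the limit $\varepsilon\to 0$, using that $S(r)-\tfrac1r\,\mathrm{Id}\to 0$ and $S_0(r)-\tfrac1r\,\mathrm{Id}\to 0$ so that the boundary term at $\varepsilon$ does not spoil the sign. A purely trace-based argument via $m'+\mathrm{tr}(S^2)+\mathrm{Ric}(\partial_r,\partial_r)=0$ should be avoided here, because combining Cauchy--Schwarz with the upper Ricci bound produces an inequality in the \emph{wrong} direction for a lower bound on $m$; the correct monotonicity genuinely requires the operator (Hessian) comparison rather than only its trace.
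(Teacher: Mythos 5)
Your proof is correct and follows essentially the same route as the paper, which offers no proof of Lemma~\ref{comp} but cites Greene--Wu \cite{GW}: their Hessian comparison theorem is exactly the matrix Riccati comparison you reconstruct, via the identification $m(r,\theta)=\Delta_M r=\operatorname{tr}S(r)$ and the model solution $S_0=(\psi'/\psi)\,\mathrm{Id}$ supplied by \eqref{curvmod}, with the singularity at the pole handled by the $\varepsilon\to0$ limit as you indicate. Your closing caveat is also apt: a trace-only (Bishop/Ricci) argument with Cauchy--Schwarz yields an upper bound on $m$, so the operator-level comparison is genuinely required for the stated lower bound.
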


\medskip 

  \noindent $\bullet$ {\bf Geometric Assumptions.}  We further assume following uniform curvature bound
  
\begin{enumerate}
\item[{\bf(G1)}] there exists a constant $\kappa >0$  such that for any $x \in M$ and for any plane $\pi \subset T_xM$
there holds $K_{\pi}(x) \leq -\kappa^2,$ where $K_{\pi}$ is the sectional curvature of the plane $\pi.$

\medskip 

\item[{\bf(G2)}] Let $K_{R}(x)$ denote the radial sectional curvature at the point $x.$ We assume 
\begin{align} \label{curvature bound}
K_R(x) \leq -\hat C (1 + d(x_0, \, x)^{\gamma}) \ \mbox{for some $o \in M, \hat C >0$ and $\gamma \geq 0.$}
\end{align}

\medskip
\end{enumerate}

%%%%%%
\begin{remark}
For the hyperbolic space $\gamma = 0,$ the above estimate with $\gamma > 0$ are satisfied if, for example, we take the model metric of the form 
\begin{align*}
\psi(\rho) = e^{f(\rho)} \ and \ \psi(\rho) = O(\rho^{1 + \frac{\gamma}{2}}) \ as \ r \rightarrow \infty\  \ if \ \gamma >0.
\end{align*}
We refer to \cite[Section~2.3]{GMV} for details on the geometry of such manifolds. Recently, in the context of nonlinear evolution equation of Porous Medium type 
on Cartan-Hadamard manifolds, such assumptions on radial sectional curvature arises in the seminal works of  Grillo-Muratori-Punzo (see \cite{GMP1, GMP2} and references therein).
\end{remark}

%%%%%%
We consider only two special subclass of the problem. If $\gamma >0,$ then there exists a positive bounded super-solution to $-\Delta_M \phi  = \lambda_1(M) \phi$ which follows from the work of F.~Punzo \cite{PF1}
and we can get sharp bounds on the exponent such that Fujita phenomena holds. On the other hand if we allow $\gamma \geq 0,$ then we can prove some partial results stated below. The main idea rests on the 
existence of positive bounded super-solution. In \cite{PF1}, it was shown using Lemma~\ref{comp} that {\it generalised eigenvalue} problem admits positive bounded super-solution under the above mentioned assumptions on the manifold. For the hyperbolic space there exists a bounded  {\it ground state} which suffices the purpose. If we consider, $\psi(r) = \frac{1}{\kappa} \sinh (\kappa r)$ then the model manifold has constant sectional curvature $-\kappa^2,$ 
the bottom of the spectrum is given by $\frac{(n-1)^2}{4}\kappa^2$ and there exists a bounded ground state. In general, the behaviour of the {\it ground state} at infinity is not known. In order to prove the existence of a global solution we need 
 a comparison principle of weak solutions to \eqref{eqn:main} and a positive bounded weak super solution (see \cite{PF1}) to the {\it eigenvalue problem} :
  \begin{align} \label{evpM}
-\Delta_M \phi = \lambda_1(M) \phi \ \ in \ M.
\end{align}
Thanks to the above geometric assumptions, such properties hold true (see \cite{PF1}). 
On the other hand the proof of finite time blow-up of solutions depend on the following properties of the heat kernel $\mathcal{K}_M.$

\medskip 

\begin{enumerate}
\item[{\bf(H1)}] The heat kernel satisfies the upper bound
\begin{align*}
\mathcal{K}_M(x,y,t) \leq \frac{C}{(\min\{t,T\})^{\frac{n}{2}}}\left(1 + \frac{d(x,y)^2}{t}\right)^{\frac{n}{2}}e^{-\frac{d(x,y)^2}{4t} - \lambda_1(M)(t-T)_+}
\end{align*}
for all $x,y \in M$ and $t,T \in (0,\infty).$ Here $d(x,y)$ denotes the geodesic distance and $C$ is a positive constant. 

\medskip

\item[{\bf(H2)}] Moreover, for large time we have the following lower bound on $\mathcal{K}_M(x,y,t)$ or more precisely, the following asymptotic estimate holds 
\begin{align*}
\lim_{t \rightarrow \infty} \frac{\ln \mathcal{K}_M(x,y,t)}{t} = -\lambda_1(M), 
\end{align*}
 locally  uniformly  on  $M\times M.$
 \end{enumerate}
\medskip 

In order to state our main results on the Cartan-Hadamard manifold, let us introduce the following notation
\begin{align} \label{lstarm}
\lambda_*(M) = 
\begin{cases}
\lambda_1(M), \ \ \ \ \ \mbox{if} \ \gamma >0 \\
\frac{(n-1)^2}{4}\kappa^2, \ \ \ \mbox{if only} \ \gamma \geq 0 \ \mbox{ is  assumed},
\end{cases}
\end{align}
where $\gamma$ is given by \eqref{curvature bound} representing the decay in the radial sectional curvature.  Note that when $M$ is a Cartan-Hadamard manifold and satisfies $(G1),$
 then it follows from \cite{MP} that 
 
 $$
 \lambda_1(M) \geq \frac{(n-1)^2}{4} \kappa^2,
 $$
and hence $\lambda_{*}(M) \leq \lambda_1(M).$

\medskip

Now by exploiting \cite[Proposition 3.1 and 4.1]{PF1}, there exists a continuous bounded weak super-solution to the eigenvalue problem \eqref{evpM} with $\lambda_1(M)$ replaced by $\lambda_*(M)$, which we denote by $\phi_M.$

 For Type-II nonlinearity we have the following results: 

\begin{theorem}\label{thm3}
Let $(M,g)$ be a Cartan-Hadamard manifold satisfying the hypotheses ${\bf (G1)}$ and ${\bf(G2)}.$ Consider the equation \eqref{eqn:main} in $M$ with $f$ is of the form \eqref{nonlinearity1}.
\begin{enumerate}

\item[(a)] If $\mu <  p\lambda_*(M),$ then there exists a constant $C>0$ such that for any non-negative $u_{0}\in C(\hn) \cap L^{\infty}(\mathbb{H}^{n})$ satisfying $u_0 \leq C \phi_M$
 there exists a global in time solution to \eqref{eqn:main} corresponding to the initial data $u_{0}.$
 
 \medskip 
 
 \item[(b)] If $\mu > p\lambda_1(M),$ then all non-negative nontrivial solutions to \eqref{eqn:main} blow-up in finite time.
\end{enumerate}

\end{theorem}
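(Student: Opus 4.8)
The plan is to handle the two parts by the two complementary mechanisms already behind Theorem~\ref{thm1}: for (a), a global bounded super-solution built from the barrier $\phi_M$ together with the comparison principle that the geometric hypotheses guarantee; for (b), an integral (Kaplan-type) argument against the heat kernel producing a differential inequality that forces blow-up. Throughout I would use the elementary bounds $s(e^{\beta s^p}-1)\ge\beta s^{p+1}$ and, for $s$ small, $s(e^{\beta s^p}-1)\le 2\beta s^{p+1}$, so that near $u=0$ the Type-II nonlinearity is squeezed between multiples of the power nonlinearity $\beta e^{\mu t}u^{p+1}$. This is exactly what makes $p+1$ play the role of the Fujita power and $p\lambda_*(M)$, $p\lambda_1(M)$ the relevant thresholds.

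For part (a) I would seek a separated super-solution $w(x,t)=\delta\,e^{-\sigma t}\phi_M(x)$ with $\sigma=\mu/p$ and $\delta>0$ small. Since $\phi_M$ is a positive bounded weak super-solution of $-\Delta_M\phi=\lambda_*(M)\phi$, one has $-\Delta_M w\ge\lambda_*(M)w$ weakly, whence
\[
\partial_t w-\Delta_M w-f(w,t)\ge w\Big[(\lambda_*(M)-\tfrac{\mu}{p})-e^{\mu t}\big(e^{\beta w^p}-1\big)\Big].
\]
Because $w\le\delta\|\phi_M\|_\infty$, a smallness condition on $\delta$ gives $e^{\beta w^p}-1\le2\beta w^p$, and with $\sigma=\mu/p$ the time factors combine to a constant, $e^{\mu t}w^p=\delta^p e^{(\mu-\sigma p)t}\phi_M^p=\delta^p\phi_M^p$; since $\mu<p\lambda_*(M)$ forces $\lambda_*(M)-\mu/p>0$, a further smallness of $\delta$ makes the bracket nonnegative, so $w$ is a global weak super-solution. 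Taking $C=\delta$ and $u_0\le C\phi_M$, the comparison principle for weak solutions (valid under (G1)--(G2) by \cite{PF1}, cf.\ Lemma~\ref{uniqueness}) yields $0\le u\le w$ as long as $u$ exists; combined with the local existence theory this a priori bound extends $u$ to a global bounded solution.

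For part (b), fix the pole $x_0$ and a putative existence time $T$, and set $y(t)=\int_M u(x,t)\,\mathcal K_M(x,x_0,T-t)\,{\rm d}v_M(x)$ on $[0,T)$. Inserting the mild formulation and using the Chapman--Kolmogorov identity $\int_M\mathcal K_M(x,y,t-s)\,\mathcal K_M(x,x_0,T-t)\,{\rm d}v_M(x)=\mathcal K_M(y,x_0,T-s)$ collapses the double integral to $y(t)=y(0)+\int_0^t\!\int_M f(u,s)\,\mathcal K_M(y,x_0,T-s)\,{\rm d}v_M(y)\,{\rm d}s$, so that $y'(t)\ge\beta e^{\mu t}\int_M u^{p+1}\mathcal K_M(\cdot,x_0,T-t)\,{\rm d}v_M$. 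Normalizing $\mathcal K_M(\cdot,x_0,T-t)\,{\rm d}v_M$ to a probability measure, applying Jensen's inequality to $s\mapsto s^{p+1}$, and using the sub-Markovian bound $N(t):=\int_M\mathcal K_M(\cdot,x_0,T-t)\,{\rm d}v_M\le1$ produces the autonomous inequality $y'(t)\ge\beta e^{\mu t}y(t)^{p+1}$. Integrating over $[0,T)$ and discarding the nonnegative endpoint term gives $\tfrac1p\,y(0)^{-p}\ge\tfrac{\beta}{\mu}\big(e^{\mu T}-1\big)$.

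It remains to bound $y(0)=\int_M u_0\,\mathcal K_M(\cdot,x_0,T)\,{\rm d}v_M$ from below, and this is where the hypotheses really enter. Restricting the integral to a compact set $K$ with $\int_K u_0\,{\rm d}v_M>0$ (possible since $u_0$ is nontrivial and nonnegative), the locally uniform asymptotics (H2), $t^{-1}\ln\mathcal K_M\to-\lambda_1(M)$, give $y(0)\ge c_0\,e^{-(\lambda_1(M)+\varepsilon)T}$ for $T$ large. Feeding this into the integrated inequality yields $c_0^{-p}e^{p(\lambda_1(M)+\varepsilon)T}\gtrsim e^{\mu T}$, i.e.\ $e^{(\mu-p\lambda_1(M)-p\varepsilon)T}$ stays bounded; choosing $\varepsilon$ small when $\mu>p\lambda_1(M)$ makes the left side diverge as $T\to\infty$, a contradiction. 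Hence no global solution can exist and every nontrivial nonnegative solution blows up in finite time. The main obstacle is precisely this last step: one must rigorously justify the heat-kernel manipulations (integrability, Fubini, Chapman--Kolmogorov) for merely bounded mild solutions and, crucially, have the sharp spectral decay rate $\lambda_1(M)$ available through (H2). This is exactly why the blow-up threshold is $p\lambda_1(M)$ rather than $p\lambda_*(M)$, leaving the borderline band $p\lambda_*(M)\le\mu\le p\lambda_1(M)$ untreated.
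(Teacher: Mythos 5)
Your proposal is correct, and it diverges from the paper in an interesting way on part (a) while matching it on part (b). For (a), the paper does not build the barrier by hand: it invokes Punzo's result \cite{PF1}*{Theorem 3.2} to get a global bounded solution $w$ of the power-nonlinearity problem $\partial_t w=\Delta_M w+e^{\mu t}w^{p+1}$ with data below $C\phi_M$, then picks $\beta_0<\bigl(1+\sup_{t>0}\|w(\cdot,t)\|_{L^\infty(M)}^p\bigr)^{-1}$ so that $e^{\mu t}w^{p+1}\ge e^{\mu t}w\bigl(e^{\beta_0 w^p}-1\bigr)$ via the elementary bound $e^{\beta_0 s}\le 1+s$, and finally removes the smallness restriction on $\beta$ by the scaling $u=(\beta_0/\beta)^{1/p}v$ from Step 1 of the proof of Theorem~\ref{thm1}(a). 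Your explicit separated super-solution $\delta e^{-\mu t/p}\phi_M$ effectively inlines this black box: the choice $\sigma=\mu/p$, which makes $e^{\mu t}w^p$ constant in time so that strict positivity of $\lambda_*(M)-\mu/p$ absorbs the nonlinear term for small $\delta$, is exactly the right mechanism, and it handles every $\beta>0$ at once with no rescaling step. What you lose is only modularity; what you gain is a self-contained argument — though note that both routes rest on the same two nontrivial inputs, namely the bounded weak super-solution $\phi_M$ of the generalized eigenvalue problem (\cite{PF1}*{Propositions 3.1 and 4.1}) and the weak comparison principle under {\bf(G1)}--{\bf(G2)}, so you are not avoiding the geometric hypotheses, only the citation. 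For (b), your Kaplan-type argument is essentially the paper's: the same functional $\Phi(x,t)=\int_M\mathcal{K}_M(x,y,T-t)u(y,t)\,{\rm d}v_M$, the same semigroup identity, Jensen's inequality, and the same lower bound $\Phi(x,0)\ge C e^{-(\lambda_1(M)+\varepsilon)T}$ from {\bf(H2)} (Lemma~\ref{lbm}, i.e.\ \cite{PF2}*{Lemma 5.1}). Two minor differences are actually to your credit: you keep only the first term $\beta u^{p+1}$ of the exponential series, and your computation shows this already yields the threshold $\mu>p\lambda_1(M)$, so the paper's summation over all $k$ of $\frac{\beta^k}{k!}\Phi^{pk+1}$ is dispensable for this statement; and you run Jensen under the sub-Markov bound $\int_M\mathcal{K}_M\,{\rm d}v_M\le 1$ (legitimate here since $s\mapsto s^{p+1}$ is convex and homogeneous, giving $\int u^{p+1}\mathcal{K}_M\ge N^{-p}y^{p+1}\ge y^{p+1}$), whereas the paper tacitly carries over the mass-one identity from $\hn$, which under {\bf(G2)} alone — with no lower curvature bound — is not automatic. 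Your closing observation about the untreated band $p\lambda_*(M)\le\mu\le p\lambda_1(M)$ matches the remark following Theorem~\ref{thm3} in the paper.
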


%For $\gamma \geq 0$ we have some partial result
%\begin{theorem}\label{thm4}
%Let $(M,g)$ be a Cartan-Hadamard manifold satisfying the hypotheses $(G1), (G2)$ with $\gamma \geq0.$ Consider equation \eqref{eqn:main} with $f$ is of the form \eqref{nonlinearity1} and let $\phi_M$ be a positive, 
%bounded super-solution to \eqref{evpM}.  

%\begin{enumerate}

%\item If $\alpha \leq q\frac{(n-1)^2}{4}\kappa^2,$ then there exists a constant $\tau(\beta)>0$ such that for any non-negative $u_{0}\in L^{\infty}(\mathbb{H}^{n})$ satisfying $u_0 \leq \tau(\beta) \phi_M$
 %there exists a global in time solution to \eqref{eqn:main} corresponding to the initial data $u_{0}.$
 
% \medskip 
 
% \item If $\alpha > q\lambda_1(M),$ then all nontrivial solutions to \eqref{eqn:main} blows-up in finite time.
%\end{enumerate}
%\end{theorem}

\begin{remark}
It is interesting to note that in the above theorem part $(a),$ existence of global solution is proved when $\mu <  p\lambda_*(M).$ So the theorem does not 
cover the case, when $\lambda_*(M) < \lambda_1(M).$ This is precisely because of the existence of  bounded  super-solution for the corresponding eigenvalue  problem 
is not yet known.  Hence we can not exploit our method in this case. 

\end{remark}

\medskip

On the other hand for Type-I nonlinearity we have the following results:

\begin{theorem}\label{thm4}

Let $(M,g)$ be a Cartan-Hadamard manifold satisfying the hypotheses ${\bf (G1)}$ and ${\bf(G2)}.$ Consider equation \eqref{eqn:main} in $M$ with $f$ is of the form \eqref{nonlinearity2}.
\begin{enumerate}

\item[(a)]  If $q < \alpha,$ then there exists a constant $C$ such that for any non-negative initial data $u_0 \in C(\hn) \cap L^{\infty}(\hn)$
satisfying $u_0 \leq C\phi_M$, \eqref{eqn:main} admits a global solution. 
\medskip 

\item[(b)] If $q > \alpha,$ then all non-negative, nontrivial solutions blow-up in finite time.

\medskip 

\item[(c)]  If $q = \alpha$ and $n > 1 + \frac{2\sqrt{2}}{\kappa},$ then \eqref{eqn:main} admits a global classical solution for non-negative initial data satisfying $u_0 \leq C\phi_M$.

\end{enumerate}
\end{theorem}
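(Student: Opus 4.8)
The plan is to transplant the proof of Theorem~\ref{thm2} from $\hn$ to $M$, replacing the three explicit ingredients used there by their Cartan--Hadamard analogues: (i) the bounded positive weak super-solution $\phi_M$ of the eigenvalue problem \eqref{evpM} (with $\lambda_1(M)$ replaced by $\lambda_*(M)$), whose existence under {\bf(G1)}--{\bf(G2)} follows from Lemma~\ref{comp} together with \cite[Proposition 3.1 and 4.1]{PF1}; (ii) the semigroup bound $S_t\phi_M\le e^{-\lambda_*(M)t}\phi_M$ for the heat semigroup $S_t=e^{t\Delta_M}$, which is immediate from $-\Delta_M\phi_M\ge\lambda_*(M)\phi_M$; and (iii) the heat-kernel bounds {\bf(H1)}--{\bf(H2)}. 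The weak comparison principle (the analogue of Lemma~\ref{uniqueness}, valid on $M$ under the stated curvature bounds and a Gaussian growth condition on competitors) then lets me sandwich solutions between explicit sub- and super-solutions, so that the whole argument reduces to analysing a scalar ODE in the time variable.

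For part (a) I would look for a global super-solution of the form $w(x,t)=\Psi(t)\,e^{-\lambda_*(M)t}\phi_M(x)$. Dividing out $e^{-\lambda_*(M)t}\phi_M$ and using (ii), $w$ is a super-solution of \eqref{eqn:main} as soon as $\Psi'\ge t^{q}\,\Psi\,\big|\ln\!\big(\Psi e^{-\lambda_*(M)t}\phi_M\big)\big|^{-\alpha}$, the tightest constraint occurring where $\phi_M$ is largest. Since $\phi_M$ is bounded and $\Psi$ will turn out to be sub-exponential, $|\ln w|\sim\lambda_*(M)\,t$, and the constraint collapses to $\Psi'/\Psi\lesssim \lambda_*(M)^{-\alpha}\,t^{\,q-\alpha}$. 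When $q<\alpha$ this forces $\ln\Psi=o(t)$, so $w$ stays finite for all $t$ and decays to $0$; choosing $C$ small so that $u_0\le C\phi_M\le w(\cdot,0)$ and invoking comparison produces a global solution. For part (b) I fix $x_0$ and set $G(t)=\int_M\mathcal{K}(T-t,x_0,y)\,u(y,t)\,dv(y)$ on $[0,T)$. Using that $\mathcal{K}$ solves the backward equation, the Laplacian terms cancel after integration by parts and one finds $G'(t)=t^{q}\int_M\mathcal{K}(T-t,x_0,y)g(u)\,dv$; Jensen's inequality applied to the (sub-)probability measure $\mathcal{K}(T-t,x_0,\cdot)\,dv$ (a probability measure when $M$ is stochastically complete) then yields $G'(t)\ge t^{q}g(G(t))$. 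Separating variables and using $g(s)\sim s|\ln s|^{-\alpha}$ near $0$ together with $g(s)\ge\kappa s^{2}$ for $s>\tfrac12$, the blow-up time of this ODE is $t_*\asymp\big(\ln\tfrac{1}{G(0)}\big)^{(\alpha+1)/(q+1)}$, while {\bf(H2)} gives $G(0)=(S_Tu_0)(x_0)\gtrsim e^{-\lambda_1(M)T}$ up to sub-exponential factors, so $t_*\asymp\big(\lambda_1(M)T\big)^{(\alpha+1)/(q+1)}$. When $q>\alpha$ the exponent is $<1$, hence $t_*<T$ for $T$ large, contradicting $G(t)\le\|u(\cdot,t)\|_\infty<\infty$; thus $u$ cannot be global.

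The delicate point, and the whole content of part (c), is the borderline $q=\alpha$, where the exponent $(\alpha+1)/(q+1)$ equals $1$ and both the super-solution ODE and the blow-up ODE become marginal, so the dichotomy is decided by the \emph{constant} $\lambda_*(M)$ rather than by the growth exponent. Here one cannot afford the crude estimate $|\ln w|\sim\lambda_*(M)t$; one must retain the full heat-kernel profile \eqref{hN}. The structural fact I would exploit is that, on the model with $K_R\le-\kappa^2$, the eigenfunction decay $e^{-\frac{(n-1)\kappa}{2}r}$, the volume growth $e^{(n-1)\kappa r}$, and the factor $e^{-\frac{(n-1)\kappa}{2}r}$ of $\mathcal{K}$ combine so that \emph{all terms linear in} $r=d(x_0,y)$ cancel, reducing the Duhamel integral to a marginally convergent space--time integral in which only the Gaussian $e^{-r^2/4(t-s)}$ and the logarithmic weight survive. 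Tracking this integral, I expect the monotone (Picard) iteration to close to a \emph{bounded} global super-solution precisely when $\lambda_*(M)=\tfrac{(n-1)^2}{4}\kappa^2>2$, that is $n>1+\tfrac{2\sqrt2}{\kappa}$, which is the stated hypothesis; the complementary marginal regime (where blow-up would require $\lambda_*(M)<1$) leaves the intermediate band open, mirroring the $n=3$ gap on $\hn$. I anticipate this sharp-constant computation---balancing the Gaussian, the volume factor, and the logarithm at criticality---to be the main obstacle, whereas parts (a) and (b) require only the coarse comparison $q\lessgtr\alpha$ and should follow the hyperbolic proof almost verbatim.
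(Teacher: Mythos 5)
Your parts (a) and (b) are essentially the paper's proof. For (a) the paper takes the super-solution $\bar u(x,t)=\theta e^{(\delta-\lambda_*(M))t}\phi_M(x)$ with $\delta<\lambda_*(M)$, which is your $\Psi(t)e^{-\lambda_*(M)t}\phi_M$ with $\Psi=\theta e^{\delta t}$; for (b) the paper uses the same kernel-averaged functional, Jensen's inequality via convexity of $g$, the lower bound $\tau(t)=Ce^{-(\lambda_1(M)+\varepsilon)t}$ from Lemma~\ref{lbm} (a consequence of {\bf(H2)}), and the estimate $\int_\delta^\infty g(s)^{-1}\,{\rm d}s\le \frac{1}{1+\alpha}\bigl(\ln(1/\delta)\bigr)^{\alpha+1}+O(1)$. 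Two small remarks: the paper obtains $\partial_t\Phi\ge t^qg(\Phi)$ from the mild (Duhamel) formulation and the semigroup property, as in Theorem~\ref{thm1}(b), rather than by differentiating a backward-kernel pairing and integrating by parts, which avoids justifying boundary terms on a noncompact manifold; and your sub-probability version of Jensen is indeed the correct reading, since stochastic completeness is not assumed under {\bf(G2)}.

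Part (c), however, is a genuine gap in your proposal: you offer a program (``I expect the monotone (Picard) iteration to close\ldots I anticipate this sharp-constant computation'') rather than a proof, and the program is misdirected on two counts. First, the explicit profile \eqref{hN} is a hyperbolic-space formula; on a general Cartan--Hadamard manifold satisfying {\bf(G1)}--{\bf(G2)} only the crude bounds {\bf(H1)}--{\bf(H2)} are available, so the cancellation of the terms linear in $r$ that you want to exploit cannot even be set up. Second, and more importantly, your premise that ``one cannot afford the crude estimate $|\ln w|\sim\lambda_*(M)t$'' at the borderline is wrong: that crude estimate is exactly what decides it. With $\bar u=\theta e^{(\delta-\lambda_*(M))t}\phi_M$, the super-solution condition at $q=\alpha$ reads $\ln(1/\bar u)\ge \delta^{-1/\alpha}t$, i.e. (after shrinking $\theta$) the scalar inequality $\delta+\delta^{-1/\alpha}\le\lambda_*(M)$, which is precisely the paper's condition \eqref{choiceof1}. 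Choosing $\delta=1$ makes the left side equal to $2$ independently of $\alpha$, so the condition becomes $\lambda_*(M)=\frac{(n-1)^2}{4}\kappa^2\ge 2$, i.e. $n\ge 1+\frac{2\sqrt{2}}{\kappa}$ --- exactly the hypothesis of (c). In other words, your own part-(a) computation, carried out at $q=\alpha$ without discarding the constant in front of $t$, already closes part (c); no heat-kernel balancing, volume-factor cancellation, or iteration scheme is needed, and the appearance of the threshold $1+\frac{2\sqrt{2}}{\kappa}$ is purely the solvability of $\delta+\delta^{-1/\alpha}\le\lambda_*(M)$ at $\delta=1$. (Your speculation about a complementary blow-up regime at $q=\alpha$ has no counterpart in the theorem, which makes no blow-up claim there on general $M$.)
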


%%%%%%%%%%%%%%%%%%%%%%%%%%%%%%%%%%%%%%%%%%%%%%%%%%%%%%%%%%%%%%%%%%%%%%%%%%%%%%%%%%%%%%%%%%%%%%%%%%%%%%%%%%%%%%%%

\section{Local Existence and Proof of Theorem~\ref{thm1}}\label{local}

\subsection{Local existence}
We prove local existence of solution to problem~\eqref{eqn:main}. The proof relies on the sub-super solution technique. Let $\{ D_k \}_{k =1}^{\infty}$ be a compact exhaustion of $\hn,$ i.e. a sequence of smooth, relatively compact domains 
in $\hn$ such that $D_1 \neq \phi,$ $ \overline{D_k} \subset D_{k +1}$ for every $k \in \mathbb{N},$ $\cup_{k =1}^{\infty} D_k = \hn.$

\medskip 

\begin{theorem}
Given any non-negative data $u_0 \in L^{\infty}(\hn) \cap C(\hn),$ there exists $T > 0$ such that the problem \eqref{eqn:main} admits a unique solution 
$u \in L^{\infty}(\hn)$ corresponding to the initial data $u_0.$ The solution is either global, or there exists a maximal time of existence $T_{{\tiny\mbox{max}}}< \infty,$ such that 
\begin{align*}
\lim_{t \uparrow T_{{\tiny\mbox{max}}}} ||u(\cdot, t)||_{L^{\infty}(\mathbb{H}^n)} = \infty.
\end{align*}
\end{theorem}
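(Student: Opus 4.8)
The plan is to build the solution by approximation on the compact exhaustion $\{D_k\}$ together with monotone iteration, exploiting the structural properties shared by both nonlinearities. First I would record the three features of \eqref{nonlinearity1} and \eqref{nonlinearity2} that drive everything: for each fixed $t$ the map $s\mapsto f(s,t)$ vanishes at $s=0$, is nondecreasing on $[0,\infty)$ (for Type-I this is convexity together with $g'(0)=0$, which forces $g'\ge 0$; for Type-II it is immediate), and is locally Lipschitz in $s$ uniformly for $t$ in compact intervals. In particular the apparent singularity of $g(s)=s|\ln s|^{-\alpha}$ at the origin is harmless, since $g\in C^{1}([0,\infty))$ with $g'(0)=0$. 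Consequently $u\equiv 0$ is a subsolution of \eqref{eqn:main}. For a supersolution I would take the spatially constant function $\bar u(t)$ solving the ODE $\bar u'(t)=f(\bar u(t),t)$ with $\bar u(0)=\|u_0\|_{L^{\infty}}$; since $\Delta_{\hn}\bar u=0$ this is an exact supersolution, it is nondecreasing, and local Lipschitz continuity of $f$ guarantees existence on a maximal interval $[0,T^{*})$ with $T^{*}>0$ depending only on $\|u_0\|_{L^{\infty}}$. Any $T<T^{*}$ then serves as a local existence time.

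Next, on each $D_k$ I would solve the Dirichlet problem for \eqref{eqn:main} with homogeneous boundary data by classical monotone iteration between the ordered pair $0\le\bar u$: iterating the associated linear parabolic problems, started from $\bar u$, produces a monotone sequence whose limit $u_k$ is, by interior and boundary Schauder estimates, a classical solution on $D_k\times(0,T]$ satisfying $0\le u_k\le\bar u(T)$ uniformly in $k$. The comparison principle on $D_k$, where $u_{k+1}\ge 0=u_k$ on $\partial D_k$, gives the monotonicity $u_k\le u_{k+1}$, so $u_k\nearrow u$ pointwise with $0\le u\le\bar u(T)$. Because this bound is uniform in $k$, interior parabolic estimates on compact subsets of $\hn\times(0,T]$ upgrade pointwise monotone convergence to $C^{2,1}_{\loc}$ convergence, whence the limit $u$ is a classical solution of \eqref{eqn:main} on $\hn\times(0,T]$; the uniform bound by $\bar u$ together with $u_k(\cdot,0)=u_0$ yields $u\in C(\hn\times[0,T])\cap L^{\infty}$ and $u(\cdot,0)=u_0$.

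For uniqueness I would argue within the class of bounded solutions. If $u,v$ are two such solutions with the same data, then $w=u-v$ solves the linear equation $\partial_t w=\Delta_{\hn}w+c(x,t)\,w$ with $c(x,t)=\bigl(f(u,t)-f(v,t)\bigr)/(u-v)$, which is bounded on $[0,T]$ by the local Lipschitz bound since $u,v$ are bounded. As $w$ is bounded it trivially satisfies the growth condition $|w|\le Ae^{c\,d(x,0)^{2}}$, so Lemma~\ref{uniqueness} applied to $\pm w$ (after absorbing the zeroth-order term) forces $w\equiv 0$. The maximal time and blow-up alternative then follow from a continuation argument: set $T_{\max}=\sup\{\,T>0:\text{a bounded solution exists on }[0,T]\,\}$, and suppose $T_{\max}<\infty$ with $\limsup_{t\uparrow T_{\max}}\|u(\cdot,t)\|_{L^{\infty}}=:M<\infty$. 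Since the local existence time depends only on the sup-norm of the data through $\bar u$, one could restart the construction from a time close to $T_{\max}$ with bounded data and extend the solution strictly past $T_{\max}$, contradicting maximality; hence $\|u(\cdot,t)\|_{L^{\infty}}\to\infty$ as $t\uparrow T_{\max}$.

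I expect the main obstacles to be twofold. First, the passage to the limit $k\to\infty$ on the noncompact manifold with control up to the initial time $t=0$: this is where the uniform supersolution $\bar u$ and interior Schauder estimates must be combined carefully to turn pointwise monotone convergence into genuine $C^{2,1}_{\loc}$ convergence and continuity at $t=0$. Second, verifying that the continuation step in the blow-up alternative really can be iterated with merely $L^{\infty}$ data, which hinges precisely on the local existence time being controlled by $\|u_0\|_{L^{\infty}}$ via the supersolution ODE. The behavior of $g$ near the origin, though superficially delicate, reduces to the observation that $g\in C^{1}$ with $g'(0)=0$ and therefore poses no genuine difficulty.
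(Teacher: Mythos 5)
Your proposal is correct and follows essentially the same route as the paper: the spatially constant ODE supersolution $\bar u$ with $\bar u(0)=\|u_0\|_{L^\infty}$ is exactly the paper's $\overline{U}(x,t)=Ca(t)$, and the rest — zero subsolution, Dirichlet problems on the exhaustion $\{D_k\}$ with monotone passage to the limit, uniqueness via Lemma~\ref{uniqueness}, and continuation based on the local existence time depending only on the sup-norm — matches the paper's argument, with your write-up supplying more of the standard details (e.g., absorbing the zeroth-order term for uniqueness). One cosmetic fix: to get the full limit $\|u(\cdot,t)\|_{L^\infty}\to\infty$ rather than just $\limsup=\infty$, run your restart argument assuming only $\liminf_{t\uparrow T_{\max}}\|u(\cdot,t)\|_{L^\infty}<\infty$, restarting along a sequence $t_j\uparrow T_{\max}$ with uniformly bounded data.
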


\begin{proof}

The proof is based on constructing a suitable sub-super solution. Let us first consider the case, when the nonlinearity is of  type~\eqref{nonlinearity1}. 
Set $h(t) := e^{\mu t},$ for $t \geq 0.$  We seek a non-negative super-solution $\overline{U}$ of the form
$$
\overline{U}(x, t) = Ca(t), \quad C > 0.
$$
Then $\overline{U}$ satisfies
\begin{align*}
a^{\prime}(t) & = h(t) \, a(t) \, (e^{ \beta (Ca(t))^p} - 1). 
\end{align*}
Now imposing the initial condition on $\overline{U}$ so that it becomes a  super-solution, we require
$$
Ca(0) \geq u_0(x), \quad \mbox{for}  \ x \in \hn.
$$ 
Therefore, we  can choose $a(0) = 1$ and $C = ||u_0||_{L^\infty(\hn)}.$ Moreover the following O.D.E
\begin{align*}
\left\{\begin{array}{ll}
  a^{\prime}(t)  = h(t) \, a(t) \, (e^{\beta(Ca(t))^p} - 1)\\  
     a(0) = 1,
\end{array}\right.
\end{align*}
with $C = ||u_0||_{L^\infty(\hn)},$ admits a unique local solution, i.e., there exists $T_0 \leq \infty$ such that the solution $a(t)$ exists for all $t \in [0, T_0).$
Define $\overline{U}(x, t) : = ||u_0||_{L^\infty(\hn)} a(t),$ where $a(t)$ solves the above problem (locally). Then clearly $\overline{U}$ is a super-solution 
of equation  \eqref{eqn:main}. In view of our non-negativity assumption on $u_0,$ clearly  $\underline{U}(x, t) \equiv 0$ is sub-solution of  problem 
\eqref{eqn:main}. Following \cite{Bandle}, one can see immediately that $\overline{U}$ and $\underline{U}$ are super and sub-solutions to the following limiting problems:
\begin{align*}
\left\{\begin{array}{ll}
  u_t \, = \, \Delta_{\hn} + e^{\mu t}\, u\, (e^{\beta \, u^p} - 1) \quad \mbox{in} \ D_k \times (0, T_0) \\
  u \, = \, 0 \quad \quad \quad \quad \quad \quad \quad \quad  \quad \quad \quad  \mbox{on} \  \partial{D_k} \times (0, T_0)   \\
  u \, = \, u_0 \quad \quad \quad \quad \quad \quad \quad \quad  \quad \quad \quad  \mbox{on} \  {D_k} \times \{ 0 \},
\end{array}\right.
\end{align*}

where $D_k$ is an exhaustion of $\hn.$ By standard regularity and monotonicity arguments, there exists a function $u_k \in C(D_k \times [0, T_0)),$  $0 \leq u_k \leq \overline{U}$ which satisfies 
the above equation. Now by using standard iteration technique we obtain a solution $U$ for \eqref{eqn:main}. Uniqueness follows using lemma~\ref{uniqueness} and \cite[Theorem~3.1]{Bandle}
with the necessary modifications. 

If $T_{{\tiny\mbox{max}}}+||u(\cdot, T_{{\tiny\mbox{max}}})||_{L^{\infty}(\mathbb{H}^n)} < \infty$ then we can redo the process with initial data $u(\cdot, T_{{\tiny\mbox{max}}})$ and initial time $T_{{\tiny\mbox{max}}}$ and get a local solution. By uniqueness of the solutions, we get a contradiction on the maximal time of existence.

\medskip 

A similar approach will provide us a solution, possibly local, for nonlinearity of  type \eqref{nonlinearity2}.

\end{proof}

\subsection{Proof of Theorem~\ref{thm1}(a)}
We break the proof into two steps. We first note that the global existence to the Cauchy problem \eqref{eqn:main} with small initial data in $L^{\infty}(\mathbb{H}^{n})$  for some $\beta_{0}>0$, implies the same for any $\beta>0$. 
 
\begin{step}
Suppose for some $\beta_{0}>0$ there exists a global in time solution $v$ to \eqref{eqn:main} with initial data $v_0.$ Then for any $\beta>0$ define $u = \left(\frac{\beta_{0}}{\beta}\right)^{\frac{1}{p}} ~v.$ Then
%$\|v_{0}\|_{L^{\infty}}\leq \tau(\beta_{0})$, for some $ \tau(\beta_{0})>0$. Then for any $\beta>0$ there %exists $\tau(\beta)>0$ such that there exists a global in time solution to \eqref{eqn:main} with the initial data %$%\|u_{0}\|_{L^{\infty}}\leq \tau(\beta)$.
\begin{align*}
\partial_{t}u-\Delta_{\mathbb{H}^{n}} u=\left(\frac{\beta_{0}}{\beta}\right)^{\frac{1}{p}} \big(\partial_{t}v-\Delta_{\mathbb{H}^{n}} v\big)
=e^{\mu t}u\big( e^{\beta u^{p}}-1\big)~\hbox{ in }  \mathbb{H}^{n}\times (0, +\infty).  \notag\\
\end{align*}
Hence $u$ is a global in time solution to \eqref{eqn:main}  with the initial data 
$u_0 = \left(\frac{\beta_{0}}{\beta}\right)^{\frac{1}{p}} ~v_0.$
%$$\|u_{0}\|_{L^{\infty}}=\left(\frac{\beta_{0}}{\beta}\right)^{1/2}\|v_{0}\|_{L^{\infty}}\leq \left(\frac{\beta_{0}}{\beta}\right)^{1/2}\tau(\beta_{0}):=\tau(\beta).$$
\end{step}

We now produce a global in time super-solution for small $\beta$, which in turn provides a global in time solution to \eqref{eqn:main}.
\begin{step}
Let $\mu \leq p \lambda_1$. Then there exists $\beta_{0} \in (0,1)$ such that  the initial value problem \eqref{eqn:main} admits a  super-solution.
\end{step}
\begin{proof}
It follows from the works of Bandle-Pozio-Tesei \cite{Bandle} and Wang-Yin \cite{WangYin} that  there exists a global in time solution to
\begin{align}\label{eqn:3}
\left\{\begin{array}{ll}
\partial_{t}w=\Delta_{\mathbb{H}^{n}} w+e^{\mu t}w^{p+1} &\hbox{ in }~ \mathbb{H}^{n}\times (0, +\infty),\\
\\
\quad w =u_{0}\in L^{\infty}(\mathbb{H}^{n}) &\hbox{ in }~ \mathbb{H}^{n}\times \{0\},
\end{array}\right.
\end{align}
for initial data $u_0$ satisfying the assumption of Theorem \ref{thm1}(a), provided $p \geq \frac{\mu}{\lambda_1}$.  Moreover $\sup \limits_{ t>0}\|w(\cdot,t)\|_{L^{\infty}(\mathbb{H}^{n})}<+\infty$.

We choose  $\beta_{0}$ such that
\begin{align*}
0<\beta_{0}< \frac{1}{1+(\sup \limits_{ t>0}\|w(\cdot,t)\|_{L^{\infty}(\mathbb{H}^{n})})^p}.
\end{align*}
Then 
\begin{align*}
\partial_{t}w-\Delta_{\mathbb{H}^{n}} w=e^{\mu t}w^{p+1} \geq e^{\mu t} w \big( e^{\beta_0 w^{p}}-1\big),
\end{align*}
where we have used the inequality 
\begin{align*}
e^{\beta_0 s} \leq s+1 \quad \hbox{ for all }~ 0\leq s\leq \frac{1-\beta_0}{\beta_0}.
\end{align*}

So $w$ is a super-solution to \eqref{eqn:main}. Moreover $\sup \limits_{ t>0}\|w(\cdot,t)\|_{L^{\infty}(\mathbb{H}^{n})}<+\infty$.

We fix a time $T$ and as before, consider a compact exhaustion $\{D_k\}$ of $\mathbb{H}^n.$ Since $w$ 
is a super-solution which is bounded through out time, the problem \eqref{eqn:main} posed in $D_k$ with Dirichlet boundary data admits a unique solution $u_k$ with initial data $u_0.$ By standard elliptic regularity, monotonicity and Cantor diagonal argument, we can extract a sub-sequence converging to a continuous weak solution to \eqref{eqn:main} subject to the initial data $u_0,$ completing the proof of global existence. 
\end{proof}
\subsection{Proof of Theorem~\ref{thm1}(b)}
 Assume $\mu >p\lambda_1$. Recall $\mathcal{K}$ denotes the heat kernel in $\mathbb{H}^{n}$.  We proceed as in Bandle-Pozio-Tesei \cite{Bandle}.
Let $u$ be a mild solution of \eqref{eqn:main}, that is
\begin{align}\label{eqmild}
u(x,t)=&~\int \limits_{\mathbb{H}^{n}}\mathcal{K}(x,y,t) \, u_{0}(y)~{\rm d}v_{\mathbb{H}^{n}}(y) \notag\\
&  +\int \limits_{0}^{t}\int \limits_{\mathbb{H}^{n}}\mathcal{K}(x,y,t-s)e^{\mu s}u(y,s)\big( e^{\beta u^{p}(y,s)}-1\big)~{\rm d}v_{\mathbb{H}^{n}}(y) \, {\rm d}s.
\end{align}
For $u_{0}\in C(\mathbb{H}^{n})\cap L^{\infty}(\mathbb{H}^{n})$, let $T_{u_{0}}$ denote the maximal interval of existence of solution $u$ of \eqref{eqn:main}.  We set  for $T < T_{u_0}$ and $(x,t)\in \mathbb{H}^{n}\times [0,T)$
\begin{align*}
\Phi(x,t):= \int \limits_{\mathbb{H}^{n}}\mathcal{K}(x,y,T-t)u(y,t)~{\rm d}v_{\mathbb{H}^{n}}(y)
\end{align*}
Then 
\begin{align*}
\Phi(x,0)= \int \limits_{\mathbb{H}^{n}}\mathcal{K}(x,y,T)u_{0}(y)~{\rm d}v_{\mathbb{H}^{n}}(y)= \left(e^{T\Delta_{\mathbb{H}^{n}}}u_{0} \right)(x). 
\end{align*}

Multiplying \eqref{eqmild} by $\mathcal{K}(x,z,T-t)$ and integrating on $\mathbb{H}^{n}$, with respect to the $z$ variable and using semigroup property of heat kernel we obtain 
\begin{align*}
& \int \limits_{\mathbb{H}^{n}}\mathcal{K}(x,z,T-t)u(z,t)~{\rm d}v_{\mathbb{H}^{n}}(z)=~\int \limits_{\mathbb{H}^{n}}\mathcal{K}(x,y,T) \, u_{0}(y)~{\rm d}v_{\mathbb{H}^{n}}(y) \notag\\
&  +\int \limits_{0}^{t}\int \limits_{\mathbb{H}^{n}}\mathcal{K}(x,y,T-s)e^{\mu s} \, u(y,s)\big( e^{\beta u^{p}(y,s)}-1\big)~{\rm d}v_{\mathbb{H}^{n}}(y) \, {\rm d}s.
\end{align*}
Therefore,
\begin{align*}
\Phi(x,t)=\Phi(x,0)+\int \limits_{0}^{t}\int \limits_{\mathbb{H}^{n}}\mathcal{K}(x,y,T-s)e^{\mu s}u(y,s)\big( e^{\beta u^{p}(y,s)}-1\big)~{\rm d}v_{\mathbb{H}^{n}}(y) \, {\rm d}s.
\end{align*}
Since $u \mapsto u\big( e^{\beta u^{p}}-1\big)$ is convex and 
\begin{align*}
\int \limits_{\mathbb{H}^{n}}\mathcal{K}(x,y,T-s)~{\rm d}v_{\mathbb{H}^{n}}(y)=1,
\end{align*}
we get by Jensen's inequality
\begin{align*}
\int \limits_{\mathbb{H}^{n}}\mathcal{K}(x,y,T-s)~u(y,s)\big( e^{\beta u^{p}(y,s)}-1\big)~{\rm d}v_{\mathbb{H}^{n}}(y)\geq \Phi(x,s)\big( e^{\beta \Phi^{p}(x,s)}-1\big).
\end{align*}
Therefore,
\begin{align*}
\Phi(x,t)-\Phi(x,0)\geq \int \limits_{0}^{t}e^{\mu s}\Phi(x,s)\big( e^{\beta \Phi^{p}(x,s)}-1\big) \, {\rm d}s,
\end{align*}
and hence for all $0\leq t\leq T$
\begin{align*}
\partial_{t}\Phi(x,t) \geq e^{\mu t}\Phi(x,t)~\big( e^{\beta \Phi^{p}(x,t)}-1\big) \geq  \frac{\beta^{k}}{k!} \Phi^{pk+1}(x,t)e^{\mu t},
\end{align*}
which in turn implies
\begin{align*}
\frac{k!}{\beta^{k}}  \frac{\partial_{t}\Phi(x,t)}{\Phi^{pk+1}(x,t)}~ \geq e^{\mu t}, \ \ \ for \ all \ k \in \mathbb{N}. 
\end{align*}
Integrating we obtain $\frac{(k-1)!}{p\,\beta^{k}}  \frac{1}{\Phi^{pk}(x,0)} ~\geq \frac{e^{\mu t}-1}{\mu},$ for all 
$k \in \mathbb{N}.$ So we get the upper bound 
\begin{align} \label{upperbound}
1\geq  \frac{p \, \beta^{k}}{(k-1)!}\left(\frac{e^{\mu t}-1}{\mu} \right)\Phi^{pk}(x,0)\quad \hbox{ for } 0\leq t\leq T.
\end{align}
Now, we use the following estimate from Bandle-Pozio-Tesei \cite{Bandle}:
\begin{align} \label{lb}
\Phi(x,0)\geq C(x) T^{-3/2}e^{-\lambda_{1}T},
\end{align}
where $C(x)>0$. So \eqref{upperbound} becomes
\begin{align*} 
\frac{p\beta^{k}}{(k-1)!}\left(\frac{e^{\mu T}-1}{\mu} \right)T^{-\frac{3pk}{2}}e^{-pk\lambda_{1}T}C(x)^{pk} ~ \leq 1, 
\end{align*}
hence, we conclude 
\medskip
\begin{align} \label{abc123}
&\Bigg[p\left(\frac{e^{\mu T}-1}{\mu} \right)\frac{T^{\frac{3p}{2}}e^{p\lambda_{1}T}}{\beta C(x)^p} \Bigg] \frac{1}{(k+1)!}\Big(\beta C(x)^pT^{-\frac{3p}{2}}e^{-p\lambda_{1}T}\Big)^{k+1} \notag\\
&~ \leq \frac{1}{k(k+1)}, 
\end{align}
Set $\Upsilon = \beta C(x)^pT^{-\frac{3p}{2}}e^{-p\lambda_{1}T}.$ Summing over all $k \in \mathbb{N}$ we obtain from \eqref{abc123}
\begin{align*}
&\Bigg[p\left(\frac{e^{\mu T}-1}{\mu} \right)\frac{T^{\frac{3p}{2}}e^{p\lambda_{1}T}}{\beta C(x)^p} \Bigg] ~\sum \limits_{k=1}^{+\infty} \frac{\Upsilon^{k+1}}{(k+1)!} \leq \sum \limits_{k=1}^{+\infty} \frac{1}{k(k+1)} \leq~ 1,\notag\\
i.e. \ \ &\left(\frac{e^{\mu T}-1}{\mu} \right)T^{\frac{3p}{2}}e^{p\lambda_{1}T}\left(e^{\Upsilon}-\Upsilon-1 \right)  \leq ~\frac{\beta C(x)^p}{p}, \notag\\
i.e. \ \ &\left(\frac{e^{\mu T}-1}{\mu} \right)T^{\frac{3p}{2}}e^{p\lambda_{1}T}\Upsilon^{2}  \leq~\frac{2\beta C(x)^p}{p}.
\end{align*}
Hence we have 
\begin{align*}
\left[T^{-\frac{3p}{2}}e^{(\mu - p\lambda_1)T} - T^{-\frac{3p}{2}}e^{-p\lambda_1T}\right] \leq \tilde C(x),
\end{align*}
for some function $\tilde C(x)$ independent of $T.$
Letting $T \to +\infty$ gives a contradiction, as $\mu > p\lambda_{1}$. Hence all solutions must blow-up in finite time.

%%%%%%%%%%%%%%%%%%%%%%%%%%%%%%%%%%%%%%%%%%%%%%%%%%%%%%%%%%%%%%%%%%%%%%%%%%%%%%%%%%%%%%%%%%%%%%%%%%%%%%%%%%%%%%%%%%%%%

\section{Proof of Theorem~\ref{thm2} : Global existence and Blow-up}\label{mainthm2}

We divide the proof in to two parts. The first part contains the question of global existence and the second part concentrated on the blow-up results.

\subsection{Proof of Theorem~\ref{thm2}}
\begin{proof}
We only need to prove the existence of a bounded global super-solution.

 Recall $\phi_1$ denotes the first eigenfunction of  $-\Delta_{\hn}$, which is bounded. We know that $w(x,t) = e^{-\lambda_1 t}\phi_1(x)$ is the unique solution to

\begin{equation} \label{eqn:heat}
\begin{cases}
&\partial_t w(x,t) = \Delta_{\hn} w(x,t) \hbox{ in }~ \mathbb{H}^{n}\times (0, \infty),\\
&w(x,0)= \phi_1(x) \in L^{\infty}(\hn).
\end{cases}
\end{equation}
Let $\varepsilon >0$ be small such that $g(s) =  s |\ln s|^{-\alpha},$ for $s \in (0,\varepsilon).$
Consider  two parameters $\delta$ and $\theta$ sufficiently small whose values will be chosen later. We look for a super-solution of the form $\bar u(x,t) = \theta e^{\delta t} w(x,t),$ where $w$ satisfies \eqref{eqn:heat}.

  We choose $\theta>0$ small enough so that $\sup_{x,t} \bar u(x,t) < \varepsilon.$ We note that it is sufficient to choose $\theta$ and $\delta$ satisfying 
\begin{align*}
0 \,<  \, \delta < \lambda_1, \ \ \ \ \ \theta ||\phi_1||_{L^{\infty}(\mathbb{H}^n)} < \varepsilon.
\end{align*}
Since the ground state is bounded and hence the above choices are feasible, and later we will make further smallness assumption on $\theta$ and $\delta.$ We assume the initial data in Theorem \ref{thm2} 
satisfies $u_0 \leq \theta \phi_1.$ We claim that the $\bar u$ is indeed a super-solution to \eqref{eqn:main} with $g$ given by \eqref{nonlinearity2} and the above choice of initial data. 
Plugging $\bar u$ in to the equation \eqref{eqn:main} we get 
\begin{equation*}
\partial_t \bar u(x,t) - \Delta_{\hn} \bar u(x,t) = \theta \delta  e^{\delta t}w(x,t) = \delta \bar u(x,t).
\end{equation*}
Therefore $\bar u$ is a super-solution to \eqref{eqn:main} with initial datum $u_0$ provided $\bar u$ satisfies 
$\delta \bar u(x,t) \geq t^q \bar u(x,t) (\ln (1/\bar u(x,t)))^{-\alpha}.$ A further simplification gives $\bar u$ must satisfy
\begin{equation}
\bar u(x,t) \leq e^{-\gamma t^{\frac{q}{\alpha}}}, \hbox{ where }~ \gamma = \delta^{-\frac{1}{\alpha}}.
\end{equation}
On the other hand, by our explicit expression of $\bar u$ we require that
\begin{align} \label{global}
\theta e^{\delta t + \gamma t^{\frac{q}{\alpha}} - \lambda_1 t} \phi_1(x) < \varepsilon.
\end{align}
The proof of global existence is now a consequence of the inequality \eqref{global}. 

\medskip

\noindent
{\bf Case 1:} $q < \alpha.$ Then since $\delta < \lambda_1,$ and $\phi_1$ is bounded, we can always choose $\theta,\delta$ satisfying \eqref{global} for all $t$. Hence $\bar u$ is a global super-solution. 

\medskip 

At the borderline case $q = \alpha$ the dimension of the underlying space, in particular, the value of $\lambda$ comes in to play.

\medskip

\noindent
{\bf Case 2:} $q = \alpha.$ Since $\gamma = \delta^{-\frac{1}{\alpha}}$, at the critical case $q = \alpha$ we need to choose $\delta$ satisfying $\delta + \delta^{-\frac{1}{\alpha}} \leq \lambda_1 = \frac{(n-1)^2}{4}.$ 
Note that $\delta + \delta^{-\frac{1}{\alpha}} \geq 1$ for sufficiently small  $\delta >0.$ If  $n \geq 4$, we can  simply take $\delta = 1,$ and choose $\theta$ small to obtain the existence of a global super solution.
This completes the proof of global existence $(a)$ and $(c)(ii).$ 
\end{proof}

\begin{remark}
It is important to note that  $\lambda_1 \leq 1$ in dimension $2$ and $3,$ hence  it is not possible to employ this strategy for such cases. In fact we will see in the next subsection that all the solutions blow-up in finite time
  when $q = \alpha$ in dimension $2.$ 
\end{remark}

\subsection{Proof of blow-up results}

For $u_{0}\in C(\mathbb{H}^{n})\cap L^{\infty}(\mathbb{H}^{n})$, let $T_{u_{0}}$ denote the maximal interval of existence of solution $u$ of \eqref{eqn:main}.  Let us proceed as in the previous section. We define for $T < T_{u_0}$ and $(x,t)\in \mathbb{H}^{n}\times [0,T)$
 \begin{align*}
\Phi(x, t) = \int_{\hn} \mathcal{K}(x,y,T- t) \, u(y, t)~{\rm d}v_{\mathbb{H}^{n}}(y).
\end{align*}
Moreover, it follows from \cite[Lemma~5.1]{Bandle}, the following estimate from below: $$\Phi(x,0)\geq C(x) T^{-3/2}e^{-\lambda_{1}T} := \tau(x,T).$$

Since $u$ is a bounded solution to \eqref{eqn:main}, we obtain 
\begin{align}\label{eqn:logid}
\Phi(x, t) = \Phi(x,0) + \int_0^t \int_{\hn} \mathcal{K}(x,y,T-s)f(u(y,s),s)~{\rm d}v_{\mathbb{H}^{n}}(y) \, {\rm d}s
\end{align}
where $f(s,t) = t^qg(s).$ 

\medskip 

Differentiating the identity \eqref{eqn:logid} and exploiting the convexity of $g$ we obtain  the inequality 
$$\partial_{t}(\Phi(x, t)) \geq t^qg(\Phi(x, t) ), \quad \mbox{for every} \   0< t \leq T.$$

Changing of variables and using the lower bound on $\Phi(x, 0)$ gives
\begin{align} \label{eqn:log1}
\int_{\tau(x, T)}^{\infty} \frac{{\rm d}\tau}{g(\tau)} 
\geq
\int_{\Phi(x, 0)}^{\Phi(x, t) } \frac{ {\rm d}\tau}{g(\tau)} \geq \frac{1}{q+1}t^{q+1},
\end{align}
Note that as $T \rightarrow \infty, \tau(x, T) \rightarrow 0+$ exponentially fast. Now we shall estimate the following integral $$\int_{\delta}^{\infty} \frac{d\tau}{g(\tau)}.$$

We know that $g(\tau) \geq \kappa \tau^2$ for $\tau \geq \frac{1}{2}$ and hence we have  

$$
\int_{\frac{1}{2}}^{\infty}\frac{{\rm d}\tau}{g(\tau)} \leq \frac{2}{\kappa} = O(1).
$$

Therefore for $\delta$ small
\begin{align} \label{eqn:log2}
\int_{\delta}^{\infty}\frac{ {\rm d}\tau}{g(\tau)} &\leq  \int_{\delta}^{\frac{1}{2}} \frac{ {\rm d}\tau}{\tau(\ln (\frac{1}{\tau}))^{-\alpha}} + O(1) \notag\\
&\leq \frac{1}{1+ \alpha} \left(\ln \left(\frac{1}{\delta}\right)\right)^{\alpha+1} + O(1).
\end{align}
Using the estimates \eqref{eqn:log1},  \eqref{eqn:log2} with $\delta = \tau(x, T)$ we get 
\begin{align*}
\frac{1}{q+1} T^{q+1} &\leq \frac{1}{\alpha + 1} \left(\ln \left(\frac{1}{C(x)T^{-\frac{3}{2}}e^{-\lambda_1T}}\right)\right)^{\alpha+1} + \, O(1)\\
&\leq \frac{1}{\alpha + 1} T^{\alpha+1} \left(\lambda_1 + \frac{1}{T}\ln \left(\frac{1}{C(x)T^{-\frac{3}{2}}}\right)\right)^{\alpha +1} + \, O(1).
\end{align*}
If $q > \alpha,$ the above identity can not hold true for every $ T > 0.$ On the other hand if $q = \alpha,$ then dividing by $ T^{q+1}$ and letting $T \rightarrow \infty$ we get $1 \leq \lambda_1^{\alpha +1}.$ In 
dimension $2$ it is not possible. Hence if $N = 2$ and $q=\alpha$ all solutions blow-up in finite time.
This completes the proof of $(b)$ and $(c)(i).$

\medskip
\begin{remark}
In dimension $3, \lambda_1 = 1.$ The case $p = \alpha$ in dimension $3$ remains open. 
\end{remark}

%%%%%%%%%%%%%%%%%%%%%%%%%%%%%%%%%%%%%%%%%%%%%%%%%%%%%%%%%%%%%%%%%%%%%%%%%%%%%%%%%%%%%%%%%%%%%%%%%%%%%%%%%%%%%%%%%%%%%%%%
\section{Proof of the theorems on Cartan Hadamard manifolds}\label{mainthm3and4}

\noindent
{\bf Proof of Global Existence:} Recall $\lambda_*(M)$ is defined by \eqref{lstarm}.

\medskip

\noindent
{\bf Proof of Theorem \ref{thm3}(a):}
We only need to exhibit a global, positive bounded super-solution to  problem \eqref{eqn:main} for small $\beta.$ Set 
\begin{align*}
H(t) = \int_0^t e^{(\mu - p\lambda_*(M)) \tau} {\rm d}\tau,   
\end{align*}
and $H_{\infty} = \lim_{t \rightarrow \infty} H(t),$ which is finite provided $\mu < p \lambda_*(M).$

By \cite[Theorem 3.2]{PF1}, for $ \mu < p \lambda_*(M)$, \eqref{eqn:3}
with $\hn$ replaced by $M,$ admits a global bounded classical solution $w(x,t)$ with initial data satisfying 
$u_0 \leq C \phi_M,$ where $C >0$ is a constant.
We choose  $\beta_{0}$ such that
\begin{align*}
0<\beta_{0}< \frac{1}{1+(\sup \limits_{ t>0}\|w(\cdot,t)\|_{L^{\infty}(M)})^p}.
\end{align*}
Then $w$ is a global bounded super-solution to \eqref{eqn:3}.

\medskip

\noindent
{\bf Proof of Theorem \ref{thm4}(a) and (c):}
Let $\varepsilon >0$ be such that $g(s) = s|\ln s|^{-\alpha}$ for $s \in [0,\varepsilon).$
Let $\phi_M$ be the bounded super solution to \eqref{evpM} with $\lambda_1(M)$ replaced by $\lambda_*(M)$. We set $w(x,t) = \theta e^{(\delta - \lambda_*(M))t} \phi_M(x),$ where $\delta < \lambda_*(M)$ and $\theta$ small such that  
\begin{align} \label{choiceof1}
\theta e^{\delta t + \gamma t^{\frac{q}{\alpha}} - \lambda_*(M) t} \phi_M(x) < \varepsilon, \ \ \gamma = \delta^{-\frac{1}{\alpha}}.
\end{align}
If $q < \alpha$ then it is always possible. On the other hand if $q = \alpha$ and $n \geq 1 + \frac{2\sqrt{2}}{\kappa},$ then $\lambda_*(M) \geq 2$ and we can choose $\delta = 1$ and $\theta$ small satisfying \eqref{choiceof1}.

\medskip

\noindent
{\bf Proof of Blow-up results.} It relies on the  following elementary lower bound on $e^{t\Delta_M}u_0,$ which is a consequence of ${\bf (H2)}.$

\begin{lemma} \label{lbm}\cite[Lemma~5.1]{PF2}
Let  $u_0 \in C(\hn) \cap L^{\infty}(\hn),$ $u_0 \not\equiv 0$ and $\varepsilon \in (0, \lambda_1(M)).$ Then   there exists a precompact set $\mathcal{O} \subset M,$ $t_0 >0$ and a constant $C=C(u_0, \mathcal{O}) >0$ depending on $u_0$ and
 $\mathcal{O}$ such that 
\begin{align*}
(e^{t\Delta_M}u_0)(x) \geq C e^{-(\lambda_1(M) + \varepsilon)t}, \ \ \ for \ x \in \mathcal{O} \ and \ t \geq t_0.
\end{align*}
\end{lemma}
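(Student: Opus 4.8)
The plan is to express $(e^{t\Delta_M}u_0)(x)$ through the heat kernel and to read off the lower bound almost directly from the asymptotic hypothesis \textbf{(H2)}, after localizing the mass of $u_0$ to a small ball on which it is bounded below. I first record the standing sign convention (used everywhere in the paper) that $u_0\ge 0$; combined with $u_0\not\equiv 0$ and the continuity of $u_0$, this produces a geodesic ball $B(y_0,\rho)\subset M$ and a constant $c_0>0$ such that $u_0\ge c_0$ on $B(y_0,\rho)$.

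First I would write
\[
(e^{t\Delta_M}u_0)(x)=\int_M \mathcal{K}_M(x,y,t)\,u_0(y)\,{\rm d}v_M(y)\;\ge\; c_0\int_{B(y_0,\rho)}\mathcal{K}_M(x,y,t)\,{\rm d}v_M(y),
\]
using the nonnegativity of both $\mathcal{K}_M$ and $u_0$ to discard the contribution coming from outside $B(y_0,\rho)$. This reduces the lemma to a uniform lower bound on $\mathcal{K}_M(x,y,t)$ as $(x,y)$ ranges over a compact product set and $t\to\infty$.

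Next I would fix the precompact set: let $\mathcal{O}$ be any precompact open subset of $M$ (for definiteness a geodesic ball centred at the pole $x_0$), and set $K:=\overline{\mathcal{O}}\times\overline{B(y_0,\rho)}$, which is compact in $M\times M$. Hypothesis \textbf{(H2)} asserts that $t^{-1}\ln\mathcal{K}_M(x,y,t)\to-\lambda_1(M)$ \emph{locally uniformly}; applying it on $K$ with tolerance $\varepsilon$ furnishes a $t_0>0$ such that $\ln\mathcal{K}_M(x,y,t)\ge-(\lambda_1(M)+\varepsilon)t$, equivalently $\mathcal{K}_M(x,y,t)\ge e^{-(\lambda_1(M)+\varepsilon)t}$, for all $(x,y)\in K$ and all $t\ge t_0$. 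Inserting this into the previous display yields, for $x\in\mathcal{O}$ and $t\ge t_0$,
\[
(e^{t\Delta_M}u_0)(x)\;\ge\; c_0\,e^{-(\lambda_1(M)+\varepsilon)t}\int_{B(y_0,\rho)}{\rm d}v_M(y)\;=\;C\,e^{-(\lambda_1(M)+\varepsilon)t},
\]
with $C:=c_0\,\mathrm{vol}_M(B(y_0,\rho))>0$, which is exactly the asserted bound.

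The argument is a short consequence of \textbf{(H2)} once the localization is in place, so I do not anticipate a serious obstacle; the only two points needing care are (i) keeping track of the nonnegativity assumption on $u_0$, without which the conclusion would be false for sign-changing data (the positive and negative parts could cancel and force faster decay), and (ii) being careful that the convergence in \textbf{(H2)} is uniform over the whole product set $K$ rather than merely pointwise in $(x,y)$ — this uniformity is precisely what allows a single threshold $t_0$ to work simultaneously for every $x\in\mathcal{O}$ and every $y\in B(y_0,\rho)$.
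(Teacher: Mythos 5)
Your proposal is correct and follows essentially the same route as the paper: localize the mass of $u_0$ on a set where it is bounded below (using $u_0\ge 0$, $u_0\not\equiv 0$ and continuity), invoke the locally uniform convergence in \textbf{(H2)} on a compact product set to get the kernel lower bound $\mathcal{K}_M(x,y,t)\ge e^{-(\lambda_1(M)+\varepsilon)t}$ for $t\ge t_0$, and integrate. The only cosmetic difference is that you decouple the ball $B(y_0,\rho)$ carrying the mass of $u_0$ from the set $\mathcal{O}$ where the conclusion holds, whereas the paper uses a single precompact set $\mathcal{O}$ for both roles; this changes nothing of substance.
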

\begin{proof}
Since $u_0 \not \equiv 0,$ there exists a precompact set $O$ such that vol$_M(O) < \infty$ and $u_0$ stays away from $0$ in $\mathcal{O}.$ By ${\bf (H2)}$, for a given $\varepsilon>0$ there exists $t_0$ such that 
\begin{align*}
\mathcal{K}_M(x,y,t) \geq \frac{1}{e^{(\lambda_1(M)+\varepsilon)t}}, \quad \mbox{for  all}  \ (x,y) \in \mathcal{O} \times \mathcal{O} \quad \mbox{and} \ t \geq t_0.
\end{align*}
Hence for $x \in \mathcal{O}$ and $t \geq t_0,$ 
\begin{align*}
e^{t\Delta_M}u_0(x) \geq \int_{\mathcal{O}}\mathcal{K}_M(x,y,t) \, u_0(y) \, {\rm d}v_M(y) \geq \frac{ \mbox{vol}_M(\mathcal{O})\inf_{\mathcal{O}} u_0}{e^{(\lambda_1(M)+\varepsilon)t}}.
\end{align*}
\end{proof}

\medskip

\noindent
{\bf Proof of Theorem \ref{thm3}(b)}
Fix $\varepsilon > 0$ such that $\mu > p(\lambda_{1}(M) + \varepsilon).$ As before setting 
\begin{align*}
\Phi(x,t):= \int \limits_{M}\mathcal{K}_M(x,y,T-t)u(y,t)~{\rm d}v_{M}(y)
\end{align*}
and following the same lines of the proof of Theorem \ref{thm1}(b), we get 
\begin{align} \label{upperbound1}
1\geq  \frac{p\beta^{k}}{(k-1)!}\left(\frac{e^{\mu t}-1}{\mu} \right)\Phi^{pk}(x,0),\quad \hbox{ for } 0\leq t\leq T \ and \ k \in \mathbb{N}.
\end{align}
Let $\mathcal{O}$ be the precompact set of Lemma \ref{lbm}. Then for every $x \in \mathcal{O},$ \eqref{upperbound1} and \eqref{lbm} provides
\begin{align} \label{zxc123}
&\frac{p\beta^{k}}{(k-1)!}\left(\frac{e^{\mu T}-1}{\mu} \right)e^{-pk(\lambda_{1}(M) + \varepsilon)T}C^{pk} \leq 1,  
\end{align}
Setting $\Upsilon = \beta C^pe^{-p(\lambda_{1}(M) + \varepsilon)T}$, multiplying by $\frac{1}{k(k+1)}$ and summing over all $k \in \mathbb{N}$ we obtain from \eqref{zxc123}
\begin{align*}
\left(\frac{e^{\mu T}-1}{\mu} \right)e^{p(\lambda_{1}(M) + \varepsilon)T}\Upsilon^{2}  \leq~\frac{2\beta C^p}{p}.
\end{align*}
Hence
\begin{align*}
\left[e^{(\mu - p(\lambda_{1}(M) + \varepsilon))T} - e^{-p\lambda_1(M)T}\right] \leq \tilde C,
\end{align*}
which is impossible to hold for every $T.$ 

\medskip

\noindent
{\bf Proof of Theorem \ref{thm4}(b):} As in \eqref{eqn:log1}, \eqref{eqn:log2} we have 
\begin{align*}
 \frac{t^{q+1}}{q+1} \leq \int_{\tau(t)}^{\infty} \frac{1}{g(s)} ds \leq \frac{1}{1+ \alpha} \left(\ln \left(\frac{1}{\tau(t)}\right)\right)^{\alpha+1} + O(1),
\end{align*}
where $\tau(t) = Ce^{-(\lambda_1(M) + \varepsilon)t}$ is the lower bound of $e^{t\Delta_M}u_0.$  Therefore 
\begin{align*}
 \frac{t^{q+1}}{q+1} \leq  \frac{t^{\alpha+1}}{\alpha+1}\left(\lambda_1(M) + \varepsilon - \frac{1}{t}\ln C \right)^{\alpha+1} + O(1). 
\end{align*}
Since $q > \alpha,$ letting $t \rightarrow \infty$ gives a contradiction.

%%%%%%%%%%%%%%%%%%%%%%%%%%%%%%%%%%%%%%%%

\section{Concluding Remarks}\label{concluding remarks}

We conclude this article with the following open question : 
\begin{align}\label{concluding_eqn:main}
\left\{\begin{array}{ll}
\partial_{t}u=\Delta_{\mathbb{H}^{3}} u+ f(u, t) &\hbox{ in }~ \mathbb{H}^{3}\times (0, T),\\
\\
\quad u =u_{0}\in C(\mathbb{H}^{3}) \cap L^{\infty}(\mathbb{H}^{3}) &\hbox{ in }~ \mathbb{H}^{3}\times \{0\},
\end{array}\right.
\end{align}
where $f$ is of the form \eqref{nonlinearity2} with $q = \alpha.$ Is there exists a global solution to \eqref{concluding_eqn:main} for small initial data or all non-negative, non-trivial solutions blow-up in finite time? The approach undertaken in this article is inadequate to capture the phenomenon in dimension $3.$
We believe that $q=\alpha$ might be a blow-up region, although we do not have any validation to our claim.

%%%%%%%%%%%%%%%%%%%%%%%%%%%%%%%%%%%%%%%%%

   \par\bigskip\noindent
\textbf{Acknowledgments.}
We are grateful to G.~Grillo for mentioning the references \cite{GMeP0, GMeP, GMP3}   to us 
and explaining some of  their results. 
D.~Ganguly is partially supported by the INSPIRE faculty fellowship (IFA17-MA98). D.~Karmakar acknowledges the support of the Department of Atomic Energy, Government of India, under project no. 12-R\&D-TFR-5.01-0520. S.~Mazumdar is partially supported by IIT Bombay SEED Grant RD/0519-IRCCSH0-025.

%%%%%%%%%%%%%%%%%%%%%%%%%%%%%%%%%%%%%%%%%%%%%%%%%%%%%%%%%%%%%%%%%%%%%%%%%%%%%%%%%%%%%%%%%%%%%%%%%%%%%%%%%%%%%%%%%%
%%%%%%%%%%%%%%%%%%%%%%%%%%%%%%%%%%%%%%%%%%%%%%%%%%%%%%%%%%%%%%%%%%%%%%%%%%%%%%%%%%%%%%%%%%%%%%%%%%%%%%%%%%%%%%%%%%

\end{document}